\DeclareMathOperator{\divv}{div}
\begin{document}
\title{Entropy-bounded solutions to the Cauchy problem of compressible planar non-resistive magnetohydrodynamics equations with far field vacuum
\thanks{
This research was partially supported by National Natural Science Foundation of China (Nos. 11971009, 11901288, 11901474, 11971307, 12071359), Scientific Research Foundation of Jilin Provincial Education Department (No. JJKH20210873KJ), Postdoctoral Science Foundation of
China (No. 2021M691219), and Exceptional Young Talents Project of Chongqing Talent (No. cstc2021ycjh-bgzxm0153).}
}
\author{Jinkai Li$\,^{\rm 1}\,$,\ Mingjie Li$\,^{\rm 2}\,$,\ Yang Liu$\,^{\rm 3, 4}\,$,\ Xin Zhong$\,^{\rm 5}\,${\thanks{Corresponding author. E-mail address: jklimath@m.scnu.edu.cn (J. Li), lmjmath@163.com (M. Li), liuyang0405@ccsfu.edu.cn (Y. Liu), xzhong1014@amss.ac.cn (X. Zhong).}}
\date{}\\
\footnotesize $^{\rm 1}\,$
School of Mathematical Sciences, South China Normal University, Guangzhou 510631, P. R. China\\
\footnotesize $^{\rm 2}\,$
College of Science, Minzu University of China, Beijing 100081, P. R. China\\
\footnotesize $^{\rm 3}\,$
School of Mathematics, Jilin University, Changchun 130012, P. R. China\\
\footnotesize $^{\rm 4}\,$ College of Mathematics, Changchun Normal
University, Changchun 130032, P. R. China\\
\footnotesize $^{\rm 5}\,$ School of Mathematics and Statistics, Southwest University, Chongqing 400715, P. R. China} \maketitle
\newtheorem{theorem}{Theorem}[section]
\newtheorem{definition}{Definition}[section]
\newtheorem{lemma}{Lemma}[section]
\newtheorem{proposition}{Proposition}[section]
\newtheorem{corollary}{Corollary}[section]
\newtheorem{remark}{Remark}[section]
\renewcommand{\theequation}{\thesection.\arabic{equation}}
\catcode`@=11 \@addtoreset{equation}{section} \catcode`@=12
\maketitle{}

\begin{abstract}
We investigate the Cauchy problem to the compressible planar non-resistive magnetohydrodynamic equations with zero heat conduction. The global
existence of strong solutions to such a model has been established by
Li and Li (J. Differential Equations 316: 136--157, 2022). However, to our best knowledge, so far there is no result on the behavior of the entropy near the vacuum region for this model. The main novelty of this paper is to give a positive response to this problem. More precisely, by a series of {\it a priori} estimates, especially the singular type estimates, we show that the boundedness of the entropy can be propagated up to any finite time provided that the initial vacuum presents only at far fields with sufficiently slow decay of the initial density.
\end{abstract}

\textit{Key words and phrases}. Compressible planar non-resistive MHD equations; uniform boundedness of entropy;
 strong solutions; far field vacuum.

2020 \textit{Mathematics Subject Classification}. 35Q35; 76N10; 76W05.


\section{Introduction}
\subsection{The planar compressible magnetohydrodynamic equations}
The time evolution of electrically conductive fluids in the presence of magnetic field is described
by the magnetohydrodynamic (MHD) equations. It is widely applied in astrophysics,
thermonuclear reactions, and industry, among many others. Assuming that the fluids are compressible, viscous, and heat conducting,
the governing equations can be read as the following system in Eulerian coordinates
\begin{align}\label{a1}
\begin{cases}
\rho_t+\divv(\rho u)=0,\\
(\rho u)_t+\divv(\rho u\otimes u)+\nabla P=(\nabla\times B)\times B+\divv\Psi(u),\\[3pt]
B_t-\nabla\times(u\times B)=-\nabla\times(\nu\nabla\times B), \quad \divv B=0,\\[3pt]
\Big(\mathcal E+\frac{|B|^2}{2}\Big)_t+\divv(u(\mathcal E+P))
=\divv\left((u\times B)\times B+\nu B\times(\nabla\times B)
+u\Psi(u)+\kappa\nabla\theta\right),
\end{cases}
\end{align}
where $\rho$, $u$, $P$, $B$, and $\theta$ denote the density,
velocity, pressure, magnetic field, and temperature, respectively.
\begin{align*}
\Psi(u)=\mu(\nabla u+\nabla^tu)+\lambda'\divv u\rm \mathbb{I}_3
\end{align*}
with $\mathbb{I}_3$ being the $3\times 3$ identity matrix, and $\nabla^tu$ being the transpose of the matrix $\nabla u$. $\mathcal E$ is the energy given by $\mathcal E:=\rho\big(e+\frac{|u|^2}{2}\big)$ with $e$
being the internal
energy.
The viscosity coefficients $\mu$ and $\lambda'$ satisfy the physical restrictions
$$\mu>0,\ 2\mu+3\lambda'\ge 0.$$
$\nu\ge 0$ is the magnetic diffusion coefficient and $\kappa\ge 0$ is
 the heat conductivity. In this paper, we consider constitutive equations
 \begin{align}
 P=R\rho\theta, \quad e=c_v\theta,
 \end{align}
for two positive constants $R$ and $c_v$. Then, by the Gibbs equation $\theta Ds=De+PD(\frac{1}{\rho})$,
where $s$ is the specific entropy, one has the following relationship between $P$ and $s$:
\begin{align*}
P=Ae^{\frac{s}{c_v}}\rho^\gamma,\ A>0,\ \gamma=\frac{R}{c_v}+1>1.
\end{align*}

There is a considerable body of literature on compressible heat conducting MHD equations \eqref{a1} due to its physical importance, complexity, rich phenomena, and mathematical challenges.
See, for example, global (variational) weak solutions \cite{DF06,LG2014,HW2008}, global strong solutions with far field vacuum \cite{LZ20,HJP22,LZ22,LZ223}, asymptotic limits of solutions \cite{JJL12,JJLX14,JLL13}, and so on. We refer the interested reader to the
monograph \cite{LQ12}, which provides a detailed derivation of \eqref{a1} from the general constitutive laws, together with an extensive review of the mathematical theory and applications of this particular model.

In this paper, we consider a three-dimensional MHD flow \eqref{a1} with spatial variables $\mathbf{x}=(x, x_2, x_3)$ which is moving in the $x$ direction and uniform in the transverse direction $(x_2, x_3)$:
\begin{align}\label{a4}
\begin{cases}
\tilde{\rho}=\rho(x, t), \quad \tilde{\theta}=\theta(x, t), \\
u=(\tilde{u}, \tilde{w})(x, t), \quad \tilde{w}=(u_2, u_3),\\
B=(b_1, \tilde{b})(x, t), \quad \tilde{b}=(b_2, b_3),
\end{cases}
\end{align}
where $\tilde{u}$ and $b_1$ are the longitudinal velocity and longitudinal magnetic field, respectively, and $\tilde{w}$ and $\tilde{b}$ are the transverse velocity and transverse magnetic field, respectively. With this special structure \eqref{a4}, \eqref{a1} reduces to the following planar compressible magnetohydrodynamic flows with the constant
longitudinal magnetic field $b_1=1$ (without loss of generality) and $\lambda=\lambda'+2\mu>0$:
\begin{align}\label{a3}
\begin{cases}
\tilde{\rho}_t+(\tilde{\rho}\tilde{u})_x=0,\\[3pt]
(\tilde{\rho}\tilde{u})_t+(\tilde{\rho}\tilde{u}^2+\tilde{P})_x
=(\lambda\tilde{u}_x)_x-\tilde{b}\cdot \tilde{b}_x,\\[3pt]
(\tilde{\rho}\tilde{w})_t+(\tilde{\rho}\tilde{u}\tilde{w})_x-\tilde{b}_x
=\mu\tilde{w}_{xx},\\[3pt]
\tilde{b}_t+(\tilde{u}\tilde{b})_x-\tilde{w}_x=(\nu\tilde{b}_x)_x,\\[3pt]
\tilde{\mathcal E}_t+(\tilde{u}(\tilde{\mathcal E}+\tilde{P}+\frac12|\tilde{b}|^2)-\tilde{w}\cdot\tilde{b})_x
=(\lambda\tilde{u}\tilde{u}_x+\mu\tilde{w}\cdot\tilde{w}_x
+\nu\tilde{b}\cdot\tilde{b}_x+\kappa\tilde{\theta}_x)_x,
\end{cases}
\end{align}
where
\begin{align*}
\tilde{\mathcal E}=\tilde{\rho}\Big(\tilde{e}+\frac12(\tilde{u}^2+|\tilde{w}|^2)\Big)
+\frac12|\tilde{b}|^2,\ \tilde{P}=R\tilde{\rho}\tilde{\theta}=(\gamma-1)\tilde{\rho}\tilde{e}.
\end{align*}
In particular, when there are no magnetic diffusion and heat conductivity
(i.e., $\nu=\kappa=0$), \eqref{a3} becomes
\begin{align}\label{a2}
\begin{cases}
\tilde{\rho}_t+(\tilde{\rho}\tilde{u})_x=0,\\[3pt]
(\tilde{\rho}\tilde{u})_t+(\tilde{\rho}\tilde{u}^2+\tilde{P})_x
=(\lambda\tilde{u}_x)_x-\tilde{b}\cdot \tilde{b}_x,\\[3pt]
(\tilde{\rho}\tilde{w})_t+(\tilde{\rho}\tilde{u}\tilde{w})_x-\tilde{b}_x=\mu\tilde{w}_{xx},\\[3pt]
\tilde{b}_t+(\tilde{u}\tilde{b})_x-\tilde{w}_x=0,\\[3pt]
\tilde{P}_t+\tilde{u}\tilde{P}+\gamma\tilde{P}\tilde{u}_x
=(\gamma-1)\big(\lambda\tilde{u}_x^2
+\mu|\tilde{w}_x|^2\big).
\end{cases}
\end{align}

There is huge literature on the studies of \eqref{a3} with temperature-dependent heat-conductivity (i.e., $\kappa=\bar\kappa\tilde{\theta}^\beta$ with constants $\bar\kappa>0$ and $\beta\geq0$). In 2002, Chen and Wang \cite{CW02} established a global classical solution to a free boundary problem with large initial data for $\beta>2$. Wang \cite{W03} considered an initial-boundary value problem in a bounded spatial domain $(0,1)$ and proved global solutions with initial data in $H^1$. For $\beta>0$, Huang-Shi-Sun \cite{HSS19} showed global strong solutions to the initial-boundary-value problem in $(0,1)$ with large initial data, and they \cite{HSS21} also proved that such a solution is nonlinearly exponentially stable as time tends to infinity.
Meanwhile, Fan-Jiang-Nakamura \cite{FJN07} obtained the global existence of weak solution to the initial-boundary-value problem with large initial data when $\kappa(\tilde{\theta})\sim 1+\tilde{\theta}^q$ with $q\geq1$. This result was later extended by Fan-Huang-Li \cite{FHL17} to $q>0$. Recently, Li and Shang \cite{LS19} derived the global non-vacuum smooth solutions with large initial data when the viscosity, magnetic diffusion, and heat conductivity coefficients depend on the specific volume and temperature.
Li \cite{L18} proved global existence and uniqueness of strong solutions to \eqref{a3} with $\nu=0$ provided that the initial density is bounded below away from vacuum and the heat conductivity coefficient $\kappa(\tilde{\theta})\sim 1+\tilde{\theta}^\alpha$ for some $0<\alpha<\infty$. It should be pointed that the results mentioned above depend heavily on bounded domains and temperature-dependent heat-conductivity.
Recently, L{\"u}-Shi-Xiong \cite{LSX21} established global non-vacuum strong solutions to \eqref{a3} with constant viscosity and heat conductivity in unbounded domains.

Due to the lack of the expression of the entropy in the vacuum region and the
high singularity and degeneracy of the entropy equation close to the vacuum region,
in spite of its importance, the mathematical analysis of the entropy for the viscous
compressible fluids in the presence of vacuum was a challenge to study its dynamics.
As stated above, the huge study of planar non-resistive magnetohydrodynamics equations are mainly focused on the well-posedness. However, there is no result on the behavior of the entropy near the vacuum region for this model.
As our fist step in our series results, we begin with a simple case \eqref{a2}.
Very recently, the global well-posedness of strong solutions of \eqref{a2} with large initial data and vacuum in $\mathbb{R}$ was obtained in our previous work \cite{LL22}. In this paper, we continue our study on the uniform boundedness of the
entropy for the model \eqref{a2} in the presence of vacuum.

\subsection{Reformulation in Lagrangian coordinates and main result}
Let $y$ be the Lagrangian coordinate and define the coordinate transform between $y$ and
the Euler coordinate $x$ as
\begin{align*}
x=\eta(y, t),
\end{align*}
where $\eta(y, t)$ is the flow map determined by $\tilde{u}$, that is
\begin{align}\label{a5}
\begin{cases}
\partial_t\eta(y, t)=\tilde{u}(\eta(y, t), t),\\
\eta(y, 0)=y.
\end{cases}
\end{align}
Denote
\begin{align}\label{a6}
\begin{cases}
\rho(y, t)=\tilde{\rho}(\eta(y, t), t), \quad
u(y, t)=\tilde{u}(\eta(y, t), t), \quad w(y, t)=\tilde{w}(\eta(y, t), t),\\
h(y, t)=\tilde{b}(\eta(y, t), t), \quad P(y, t)=\tilde{P}(\eta(y, t), t).
\end{cases}
\end{align}
Then it is not hard to check that
\begin{align*}
&(\tilde u_x, \tilde{w}_x, \tilde{b}_x, \tilde{P}_x)
=\left(\frac{u_y}{\eta_y}, \frac{w_y}{\eta_y}, \frac{h_y}{\eta_y},
\frac{P_y}{\eta_y}\right),\quad (\tilde{u}_{xx}, \tilde{w}_{xx})=\bigg(\frac{1}{\eta_y}\left(\frac{u_y}{\eta_y}\right)_y,
\frac{1}{\eta_y}\left(\frac{w_y}{\eta_y}\right)_y\bigg),\\[3pt]
&\tilde{\rho}_t+\tilde{u}\tilde{\rho}_x=\rho_t,
\, \tilde{u}_t+\tilde{u}\tilde{u}_x=u_t, \, \tilde{w}_t+\tilde{u}\tilde{w}_x=w_t, \,
\tilde{b}_t+\tilde{u}\tilde{b}_x=h_t, \, \tilde{P}_t+\tilde{u}\tilde P_x=P_t.
\end{align*}
Define a function $J=J(y, t)$ as
\begin{align*}
J(y, t)=\eta_y(y, t),
\end{align*}
then it follows from \eqref{a5} and \eqref{a6} that
\begin{align}\label{1.5}
J_t=u_y.
\end{align}
Thus, \eqref{a2} can be rewritten in the Lagrangian coordinate as
\begin{align}\label{1.6}
\begin{cases}
\rho_t+\frac{u_y}{J}\rho=0,\\[3pt]
\rho u_t-\frac{\lambda}{J}\Big(\frac{u_y}{J}\Big)_y+\frac{P_y}{J}+\frac{1}{ J}h\cdot h_y=0,\\[3pt]
\rho w_t-\frac{\mu}{J}\Big(\frac{w_y}{J}\Big)_y=\frac{1}{J}h_y,\\[3pt]
h_t+\frac{u_y}{J}h-\frac{w_y}{J}=0,\\[3pt]
P_t+\gamma\frac{u_y}{J}P=(\gamma-1)\Big(\lambda\left|\frac{u_y}{J}\right|^2+\mu\left|\frac{w_y}{J}\right|^2\Big).
\end{cases}
\end{align}
Due to \eqref{1.5} and $\eqref{1.6}_1$, it holds that
\begin{align*}
(J\rho)_t=J_t\rho+J\rho_t=u_y\rho-J\frac{u_y}{J}\rho=0,
\end{align*}
from which, by setting $\rho|_{t=0}=\rho_0$ and noticing that $J|_{t=0}=1$, we obtain that
\begin{align*}
J\rho=\rho_0.
\end{align*}
Therefore, one can rewrite \eqref{1.6} as
\begin{align}\label{1.7}
\begin{cases}
J_t=u_y,\\[3pt]
\rho_0u_t-\lambda\big(\frac{u_y}{J}\big)_y+P_y+h\cdot h_y=0,\\[3pt]
\rho_0w_t-\mu\big(\frac{w_y}{J}\big)_y=h_y,\\[3pt]
h_t+\frac{u_y}{J}h-\frac{w_y}{J}=0,\\[3pt]
P_t+\gamma\frac{u_y}{J}P
=(\gamma-1)\big(\lambda\left|\frac{u_y}{J}\right|^2
+\mu\left|\frac{w_y}{J}\right|^2\big).
\end{cases}
\end{align}

We will consider the Cauchy problem and, thus, complement system \eqref{1.7} with the initial condition
\begin{align}\label{1.8}
(J, \sqrt{\rho_0}u, \sqrt{\rho_0}w, h, P)|_{t=0}=(J_0,\sqrt{\rho_0}u_0, \sqrt{\rho_0}w_0, h_0, P_0),
\end{align}
where $J_0$ has uniform positive lower and upper bounds. It should be pointed out that,
by the definition of $J$, the initial $J_0$ should be identically one;  however, in order to extend the local solution to be a
global one, one may take some positive time $T_*$ as the initial time at which $J$ is
not necessary to be identically one. As a result, we have to deal with the local
well-posedness result with initial $J_0$ not being identically one.

The following conventions will be used throughout this paper. For $1\le q\le \infty$
and positive integer $m$, $L^q=L^q(\mathbb{R})$ and $W^{m, q}=W^{m, q}(\mathbb{R})$
denote the standard
Lebesgue and Sobolev spaces, respectively, and $H^m=W^{m, 2}$. For simplicity,
$L^q$ and $H^m$ denote also their $N$ product spaces $(L^q)^N$ and $(H^m)^N$, respectively.
$\|u\|_q$ is the $L^q$ norm of $u$, and $\|(f_1, f_2, \ldots, f_n)\|_X$ is the sum $\sum_{i=1}^N\|f_i\|_X$ or
the equivalent norm $(\sum_{i=1}^N\|f_i\|_X^2)^{\frac12}$. Moreover, we write
\begin{align*}
\int \cdot dy=\int_{\mathbb{R}}\cdot dy.
\end{align*}

Local and global strong solutions to the problem \eqref{1.7}--\eqref{1.8} are defined in the
following two definitions.
\begin{definition}
Given a positive time $T\in (0, \infty)$. A quintuple $(J, u, w, h, P)$ is called a strong solution
to the problem \eqref{1.7}--\eqref{1.8}, on $\mathbb{R}\times (0, T)$, if it has the properties
\begin{align*}
&\inf_{y\in \mathbb{R}, t\in (0, T)}J(y, t)>0, \quad P\ge 0 \ \text{\rm on}~\mathbb{R}\times(0, T),\\
&J-J_0\in C([0, T]; H^1), \quad \frac{J_y}{\sqrt{\rho_0}}\in L^\infty(0, T; L^2),
\quad J_t\in L^\infty(0, T; L^2),\\
&(\sqrt{\rho_0}u, \sqrt{\rho_0}w)\in C([0, T]; L^2), \quad (u_y, w_y)\in L^\infty(0, T; L^2), \\
&\left(\sqrt{\rho_0}u_t, \sqrt{\rho_0}w_t, \frac{u_{yy}}{\sqrt{\rho_0}}, \frac{w_{yy}}{\sqrt{\rho_0}}\right)
\in L^2(0, T; L^2),\quad h_t\in L^\infty(0, T; L^2),\\
&(P, h)\in C([0, T]; L^2), \quad \left(\frac{P_y}{\sqrt{\rho_0}}, \frac{h_y}{\sqrt{\rho_0}}\right)
\in L^\infty(0, T; L^2), \quad P_t\in L^4(0, T; L^2),
\end{align*}
satisfies equations \eqref{1.7}, a.e. in $\mathbb{R}\times (0, T)$, and fulfills the initial condition \eqref{1.8}.
\end{definition}

\begin{definition}
A quintuple $(J, u, w, h, P)$ is called a global strong solution to the problem \eqref{1.7}--\eqref{1.8},
if it is a strong solution to the same system on $\mathbb{R}\times(0, T)$, for any positive time $T\in (0, \infty)$.
\end{definition}

Due to \eqref{1.7}$_4$, we see that the equation satisfied by $|h|^2$ has the similar structure as \eqref{1.7}$_5$. Thus, we can obtain the following local existence of strong solutions to system \eqref{1.7} with initial condition \eqref{1.8}, which can be proved in the similar way as those in \cite{LX20,LX22}.
\begin{theorem}[Local well-posedness]\label{thm1}
Assume that $(\rho_0, u_0, w_0, h_0, P_0)$ satisfies
\begin{align*}
&\inf_{y\in (-r, r)}\rho_0(y)>0, \quad
\forall r\in (0, \infty), \quad \rho_0\le \bar{\rho} ~on~\mathbb{R},\label{h1}\tag{$\rm H1$}\\
&\left(\sqrt{\rho_0}u_0, \sqrt{\rho_0}w_0, u_0', w_0', h_0, P_0, \frac{P_0'}{\sqrt{\rho_0}},
\frac{h_0'}{\sqrt{\rho_0}}\right)\in L^2,\quad P_0\ge 0~on~\mathbb{R} \label{h2}\tag{$\rm H2$},\\
&\underline{J}\le J_0\le \bar{J}~on~\mathbb{R}, \quad \frac{J_0'}{\sqrt{\rho_0}}\in L^2,
\end{align*}
for positive constants $\bar{\rho}$, $\underline{J}$, and $\bar{J}$. Denoting $F_0:=\mu w_0'-h_0$,
 $G_0:=\lambda u_0'-P_0-\frac{|h_0|^2}{2}$, and $H_0=|h_0|^2$,
then the followings hold.
\begin{description}
  \item{\rm(i)} There is a positive time $T$ depending only on $\gamma$, $\lambda$, $\mu$, $\bar{\rho}$,
  $\|u_0'\|_2$, $\|w_0'\|_2$, $\|P_0\|_2$, $\|h_0\|_4$, $\|h_0\|_\infty$, and $\|P_0\|_\infty$,
  such that system \eqref{1.7}, subject to the initial condition \eqref{1.8}, has
   unique strong solution $(J, u, w, h, P)$, on $\mathbb{R}\times (0, T)$.
  \item{\rm(ii)} Assume in addition that
  \begin{align}\label{h3}\tag{$\rm H3$}
  |\rho_0'(y)|\le K_1\rho_0^\frac32, \quad \forall y\in \mathbb{R}, \quad  (\rho_0^{-\frac{\alpha}{2}}F_0,
  \rho_0^{-\frac{\alpha}{2}}G_0, \rho_0^{-\frac{\alpha}{2}}H_0)\in L^2,
  \quad (\rho_0^{-\frac{\alpha}{2}}P_0,
  \rho_0^{-\frac{\alpha}{2}}h_0)\in H^1,
  \end{align}
  for two positive constants $\alpha$ and $K_1$.
\end{description}

Then, $(J, u, w, h, P)$ has the additional regularities
\begin{align}
\begin{cases}
\left(\rho_0^{-\frac{\alpha}{2}}F, \rho_0^{-\frac{\alpha}{2}}G\right)\in L^\infty(0, T; L^2)\cap L^4(0, T;
L^\infty),\quad \left(\rho_0^{-\frac{\alpha+1}{2}}F, \rho_0^{-\frac{\alpha+1}{2}}G\right)\in L^2(0, T; L^2),\label{1.9}\\
~\rho_0^{-\frac{\alpha}{2}}H\in L^\infty(0, T; L^2)\cap L^2(0, T; L^2),
\quad \rho_0^{-\frac{\alpha}{2}}H^\frac32\in L^2(0, T; L^2), \quad\rho_0^{-\frac{\alpha}{2}}\sqrt{P}H\in L^2(0, T; L^2),
\end{cases}
\end{align}
where $H:=|h|^2$ is the transverse magnetic field, $G:=\lambda\frac{u_y}{J}-P-\frac{H}{2}$ is the effective viscous flux, $F:=\mu\frac{w_y}{J}+h$ is the transverse effective viscous flux,
and
\begin{align}\label{1.11}
\begin{cases}
u\in L^\infty(0, T; H^1), \quad \text{if}~u_0\in H^1~\text{and}~\alpha\ge 1,\\
w\in L^\infty(0, T; H^1), \quad \text{if}~w_0\in H^1~\text{and}~\alpha\ge 1,\\
\theta\in L^\infty(0, T; H^1), \quad \text{if}~\theta_0\in H^1,
\frac{J_0'}{\rho_0}\in L^2,~\text{and}~\alpha\ge 1,\\
s\in L^\infty(0, T; L^\infty),\quad \text{if}~s_0\in L^\infty~\text{and}~\alpha\ge \gamma,
\end{cases}
\end{align}
where $\theta:=\frac{P}{R\rho}$ and $s:=c_v\ln\big(\frac{P}{A\rho^\gamma}\big)$, respectively,
 are the corresponding temperature
and entropy, with $\rho:=\frac{\rho_0}{J}$, $\theta_0:=\frac{P_0}{R\rho_0}$,
and $s_0:=c_v\ln\big(\frac{P_0}{A\rho_0^\gamma}\big)$ being the density,
the initial temperature, and the initial entropy, respectively.
\end{theorem}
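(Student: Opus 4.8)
The plan is to prove (i) by a linearization-and-iteration scheme adapted to the far-field vacuum and then to extract the weighted bounds in (ii) from a priori estimates on the effective fluxes. Since the density enters the momentum equations \eqref{1.7}$_2$--\eqref{1.7}$_3$ only through the degenerate coefficient $\rho_0$, the natural energy variables are $\sqrt{\rho_0}u$ and $\sqrt{\rho_0}w$; following the framework of \cite{LX20,LX22} I would freeze $J$, $\rho_0/J$ and $h$ in the principal parts, solve the resulting linear parabolic problems for $(u,w)$ together with the transport problems for $(J,h,P)$, and close a contraction in a low-regularity norm on a short interval $[0,T]$. The only structural novelty relative to the Navier--Stokes case is the magnetic coupling; because \eqref{1.7}$_4$ forces $H:=|h|^2$ to obey a transport-reaction equation of the same type as the pressure equation \eqref{1.7}$_5$, the same scheme and the same $\sqrt{\rho_0}$-weighted energy estimates carry over, yielding existence, uniqueness, and the regularity recorded in the definition of strong solution.

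For (ii) the key observation is that the momentum equations take the divergence form
\begin{align*}
\rho_0 u_t = G_y, \qquad \rho_0 w_t = F_y,
\end{align*}
with $G=\lambda\frac{u_y}{J}-P-\frac{H}{2}$ and $F=\mu\frac{w_y}{J}+h$, using $h\cdot h_y=\frac12 H_y$. Differentiating these identities and invoking \eqref{1.7}$_1$, \eqref{1.7}$_4$ and \eqref{1.7}$_5$, I would derive closed evolution equations for $F$, $G$ and $H$, systematically substituting $\frac{u_y}{J}=\frac1\lambda(G+P+\frac{H}{2})$ and $\frac{w_y}{J}=\frac1\mu(F-h)$ to eliminate all bare velocity gradients. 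One then tests these equations against the singular weights $\rho_0^{-\alpha}F$, $\rho_0^{-\alpha}G$, $\rho_0^{-\alpha}H$ and their higher-order analogues. The weight $\rho_0^{-\alpha}$ is time-independent, so it commutes with $\partial_t$; the delicate point is that each integration by parts in the parabolic term produces a factor $\rho_0'$, which is controlled precisely by the slow-decay hypothesis $|\rho_0'|\le K_1\rho_0^{3/2}$ in \eqref{h3}. After absorbing these terms and applying Gronwall, one obtains the family \eqref{1.9}, in which the extra members $\rho_0^{-\frac{\alpha}{2}}H^{\frac32}$ and $\rho_0^{-\frac{\alpha}{2}}\sqrt{P}H$ arise exactly from the quadratic magnetic self-interaction in the $H$-equation.

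The regularities \eqref{1.11} then follow by reading off the fluxes. From $u_y=\frac{J}{\lambda}(G+P+\frac{H}{2})$ and the $L^2$ control of $G,P,H$ one gets $u\in L^\infty(0,T;H^1)$ when $u_0\in H^1$ and $\alpha\ge1$, and likewise for $w$; the bound on $\theta=\frac{P}{R\rho}=\frac{PJ}{R\rho_0}$ comes from the weighted $H^1$ control of $P$. For the entropy I would use that $s=c_v\ln\!\big(P/(A\rho^\gamma)\big)$ satisfies
\begin{align*}
s_t=\frac{c_v(\gamma-1)}{P}\Big(\lambda\big|\tfrac{u_y}{J}\big|^2+\mu\big|\tfrac{w_y}{J}\big|^2\Big)\ge 0,
\end{align*}
so $s$ is nondecreasing and the lower bound $s\ge \inf s_0$ is inherited from $s_0\in L^\infty$ for free. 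This monotonicity also yields $P\ge A e^{\inf s_0/c_v}\rho^\gamma\ge c\,\rho_0^\gamma$, hence $1/P\le C\rho_0^{-\gamma}$; the entropy-production density is then bounded by $C\rho_0^{-\gamma}\big(F^2+G^2+H^2+P^2+|h|^2\big)$, whose time integral is finite by \eqref{1.9} with $\alpha=\gamma$. This is exactly why the threshold $\alpha\ge\gamma$ governs the upper bound on $s$.

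I expect the main obstacle to be the closure of the singular-weighted estimates in the second step. Unlike the unweighted energy estimates, every integration by parts and every application of the Gagliardo--Nirenberg/Sobolev inequalities must be carried out against the blow-up weight $\rho_0^{-\alpha}$, so one must track the precise power of $\rho_0$ in each nonlinear term and verify that $|\rho_0'|\le K_1\rho_0^{3/2}$ together with the full hierarchy $F,G,H$ (and $P,h$) suffices to absorb them. The magnetic contributions $H^{\frac32}$ and $\sqrt{P}H$, which have no analogue in the pure fluid problem, are the genuinely new difficulty: they force the entire weighted hierarchy in \eqref{1.9} to be balanced simultaneously rather than estimated one quantity at a time.
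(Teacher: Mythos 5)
Your proposal follows essentially the same route as the paper: the paper does not prove Theorem \ref{thm1} in detail but defers to \cite{LX20,LX22}, citing precisely your key observation that the equation for $H=|h|^2$ has the same structure as the pressure equation, and the singular-weight estimates you describe for $F$, $G$, $H$ (the evolution equations obtained by eliminating $\frac{u_y}{J}$ and $\frac{w_y}{J}$ in favor of the fluxes, the role of $|\rho_0'|\le K_1\rho_0^{3/2}$ under integration by parts, the origin of the $H^{3/2}$ and $\sqrt{P}H$ terms from the good-signed quadratic magnetic terms, and the $\alpha\ge\gamma$ threshold for the entropy) are exactly the computations the paper carries out in Section \ref{sec2} for the global analogue. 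The only cosmetic differences are that the paper justifies testing with singular weights via the regularization $\rho_\delta=\rho_0+\delta$ and cutoffs $\varphi_r$ (which your sketch should include, since $\rho_0^{-\alpha/2}F$ etc.\ are not a priori in $L^2$), and that it obtains the upper entropy bound by integrating the $P$-equation to control $P/\rho_0^\gamma$ directly rather than integrating $s_t$ --- where, in your version, the contribution $P^2/P$ should be read simply as $P\le\|P\|_\infty$ rather than $\rho_0^{-\gamma}P^2$, since bounding the latter presupposes the conclusion.
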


The main result of this paper reads as follows.
\begin{theorem}\label{thm2}
For the two positive constants $\alpha$ and $K_1$, let \eqref{h1}--\eqref{h3} be satisfied,  and further assume that
\begin{align}\label{h4}\tag{$\rm H4$}
\rho_0\in L^1, \quad P_0\in L^1.
\end{align}
  Then, \eqref{1.9} and \eqref{1.11} hold for any $T\in (0, \infty)$.

\end{theorem}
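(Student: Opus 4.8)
The plan is to promote the local solution furnished by Theorem~\ref{thm1} to a global one by establishing \emph{a priori} bounds for the quantities in \eqref{1.9}--\eqref{1.11} that remain finite on every interval $[0,T]$ with $T<\infty$. Once such bounds are available, the continuation criterion implicit in Theorem~\ref{thm1}(i) applies: the local existence time depends only on $\gamma,\lambda,\mu,\bar\rho$ and on $\|u_y\|_2,\|w_y\|_2,\|P\|_2,\|h\|_4,\|h\|_\infty,\|P\|_\infty$, so as long as these (and the density bound, which follows from a lower bound on $J$) stay controlled on $[0,T]$, the solution can be extended by a fixed time step and $[0,T]$ is covered in finitely many restarts. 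Thus the whole proof reduces to closing global-in-time estimates, and the new hypothesis \eqref{h4} enters precisely to make the energy integrals converge.

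First I would record the basic energy identity. Writing the physical energy in Lagrangian variables as $\mathcal E(t)=\int\big(\tfrac12\rho_0(u^2+|w|^2)+\tfrac{PJ}{\gamma-1}+\tfrac{HJ}{2}\big)\,dy$, a direct computation using \eqref{1.7} and $J_t=u_y$ shows that the viscous-dissipation and the pressure/magnetic-work contributions cancel exactly, so $\frac{d}{dt}\mathcal E=0$. The internal energy is finite because $P_0\in L^1$, while the mass bound $\rho_0\in L^1$ from \eqref{h4} will be used repeatedly below to dominate low-order integrals of the form $\int\rho_0(\cdots)$. Conservation of $\mathcal E$ then gives $L^\infty(0,T;L^2)$ control of $\sqrt{\rho_0}u,\sqrt{\rho_0}w$ and $L^\infty(0,T;L^1)$ control of $PJ$ and $HJ$. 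Next, from $G=\lambda\frac{u_y}{J}-P-\frac H2$ and $\frac{u_y}{J}=(\ln J)_t$ one has the pointwise relation $\lambda(\ln J)_t=G+P+\frac H2$; since $P,H\ge0$ and, by \eqref{h1}, $|G|\le\bar\rho^{\alpha/2}\,|\rho_0^{-\alpha/2}G|$, integrating in $t$ against the flux bounds below yields two-sided pointwise bounds $0<\underline{J}(T)\le J\le\bar J(T)<\infty$, and hence matching bounds for $\rho=\rho_0/J$. This step is genuinely coupled to the flux estimates (the damping coefficients there involve $\frac{u_y}{J}$), so the $J$-bounds and the weighted estimates are closed jointly through a bootstrap.

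The heart of the argument is the family of singular weighted estimates \eqref{1.9}. Differentiating $F=\mu\frac{w_y}{J}+h$, $G=\lambda\frac{u_y}{J}-P-\frac H2$ and $H=|h|^2$ in time and using $\rho_0u_t=G_y$, $\rho_0w_t=F_y$ and \eqref{1.7} produces, for $F$ and $G$, degenerate parabolic equations of the schematic form $X_t=\frac{c}{J}\big(\frac{X_y}{\rho_0}\big)_y-(\text{damping})\,X+(\text{source})$, while $H$ obeys the damped transport equation $H_t+\big(2\frac{u_y}{J}+\frac2\mu\big)H=\frac2\mu\,h\cdot F$. Testing the $F$- and $G$-equations against $\rho_0^{-\alpha}F$ and $\rho_0^{-\alpha}G$, and the $H$-equation against $\rho_0^{-\alpha}H$, generates the dissipation $\int\rho_0^{-\alpha-1}(F_y^2+G_y^2)/J$ (which, after inserting the $J$-bounds, supplies the $L^2(0,T;L^2)$ members of \eqref{1.9}) together with commutator terms carrying a factor $\rho_0'$. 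This is exactly where the slow-decay hypothesis \eqref{h3}, $|\rho_0'|\le K_1\rho_0^{3/2}$, is decisive: it turns each commutator $\rho_0^{-\alpha-1}\rho_0'(\cdots)$ into a milder $\rho_0^{-\alpha+1/2}(\cdots)$ term absorbable by the dissipation and the energy. The coupling is real—the $G$-equation feeds on $H$ and $E=\frac{w_y}{J}$, the $H$-equation feeds on $F$, and the cross terms produce precisely the quantities $\rho_0^{-\alpha/2}H^{3/2}$ and $\rho_0^{-\alpha/2}\sqrt{P}H$ listed in \eqref{1.9}—so the three weighted $L^2$ estimates must be closed simultaneously by a single Gronwall inequality. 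I expect this simultaneous closure, and in particular the bookkeeping that keeps every commutator subordinate to the dissipation under \eqref{h3}, to be the main obstacle.

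Finally, the $L^4(0,T;L^\infty)$ bounds on $\rho_0^{-\alpha/2}F$ and $\rho_0^{-\alpha/2}G$ follow by Gagliardo--Nirenberg from their $L^\infty(0,T;L^2)$ norms and the dissipation just obtained, completing \eqref{1.9}. To reach \eqref{1.11}, I would recover $\frac{u_y}{J}=\frac1\lambda\big(G+P+\frac H2\big)$ and $\frac{w_y}{J}=\frac1\mu(F-h)$ and use the $J$-bounds to get $u,w\in L^\infty(0,T;H^1)$ when $\alpha\ge1$; the temperature $\theta=\frac{P}{R\rho}$ is handled likewise from the $H^1$-type control of $\rho_0^{-\alpha/2}P$. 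The entropy bound $s=c_v\ln\!\big(\frac{P}{A\rho^\gamma}\big)\in L^\infty$ is the payoff: it requires keeping $P/\rho^\gamma$ away from both $0$ and $\infty$ uniformly as $\rho\to0$ at the far field, which forces the stronger threshold $\alpha\ge\gamma$ and is extracted from the weighted pressure and density estimates that quantify how $P$ and $\rho_0$ degenerate together. With every norm in \eqref{1.9}--\eqref{1.11} bounded on an arbitrary $[0,T]$, the continuation scheme of the first paragraph delivers the global solution.
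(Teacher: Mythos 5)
Your overall architecture (a priori bounds for the quantities in \eqref{1.9}--\eqref{1.11} on an arbitrary $[0,T]$, then continuation) matches the paper's, which runs a maximal-time contradiction argument based on Lemma \ref{l27}: at the putative maximal time $T_\ell$ the weighted data $(\rho_0^{-\alpha/2}F,\rho_0^{-\alpha/2}G,\rho_0^{-\alpha/2}H,\rho_0^{-\alpha/2}h)|_{t=T_\ell}$ are still in $L^2$, so Theorem \ref{thm1}(ii) restarts the solution. Your identification of the fluxes $F$, $G$, $H$, their parabolic/damped-transport equations, the role of \eqref{h3} in taming the $\rho_0'$ commutators, and the thresholds $\alpha\ge1$ and $\alpha\ge\gamma$ for \eqref{1.11} are all consistent with the paper.

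However, there is a genuine gap at the step you call the heart of the argument. You propose to ``test the $F$- and $G$-equations against $\rho_0^{-\alpha}F$ and $\rho_0^{-\alpha}G$'' and treat the $\rho_0'$ commutators as the main obstacle. On the whole line with far-field vacuum this formal testing is not justified: the weight $\rho_0^{-\alpha}$ blows up as $|y|\to\infty$, so one must localize with a cutoff $\varphi_r$, and the integration by parts then produces the error term
\begin{align*}
\int_0^t\!\!\int_{r\le|y|\le 2r}\frac{|F|^2+|G|^2}{\rho_0^{\alpha+1}}\,|\varphi_r'|^2\,dy\,d\tau,
\end{align*}
which carries the weight $\rho_0^{-(\alpha+1)}$ --- one power \emph{more} singular than anything you have yet estimated. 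In \cite{LX20} this term is absorbed only because of the decay lower bound \eqref{s1}, which gives $|\varphi_r'|\le M_0\sqrt{\rho_0}$; the whole point of Theorem \ref{thm2} is that \eqref{s1} is \emph{not} assumed, so your scheme as written reduces to the old argument and does not close. The paper's fix --- and its main technical novelty --- is to replace the test function by $\frac{\rho_0}{\rho_\delta^{\alpha+1}}JX\varphi_r^2$ with the regularized weight $\rho_\delta=\rho_0+\delta$, so that the cutoff error becomes $\frac{C\delta^{-\alpha}}{r^2}\int_0^t\int_{r\le|y|\le 2r}\rho_0^{-\alpha}(|F|^2+|G|^2)\,dy\,d\tau$, which vanishes as $r\to\infty$ for each fixed $\delta$ by the local-in-time regularity \eqref{1.9}; one then sends $r\to\infty$ first and $\delta\to0$ second (see \eqref{2.62}--\eqref{t2.66}). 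Without this (or an equivalent device) your Gronwall closure for the weighted $L^2$ norms is not established. A secondary, smaller point: your route to the entropy lower bound via ``weighted pressure and density estimates'' is more complicated than needed; the paper simply observes $(J^\gamma P)_t=(\gamma-1)J^{\gamma-2}(\lambda u_y^2+\mu|w_y|^2)\ge0$, so $J^\gamma P\ge P_0$ pointwise and the lower bound of $P/\rho^\gamma$ is inherited directly from $s_0\in L^\infty$.
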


\begin{remark}
We remove the following key assumption of initial density used in \cite{LX20}
\begin{align}\label{s1}\tag{$*$}
\rho_0(y)\ge \frac{A_0}{(1+|y|)^2}, \quad \forall y\in\mathbb{R},
\end{align}
for some positive constant $A_0$, which is very important to control the term $|u(y, t)|$ by $\|\sqrt{\rho_0}u\|_2$.
In particular, Theorem \ref{thm2} gives a positive answer to the dynamics of the entropy for planar non-resistive magnetohydrodynamics equations
without heat conduction.
\end{remark}

\begin{remark}
For the the heat conductive
case \eqref{a3}, one may only need to deal with the the far field vacuum, as the heat conductivity will make the temperature strictly positive everywhere after the initial time,
which implies that the entropy becomes unbounded instantaneously if the interior
vacuum occurs initially. However, positive heat conductivity leads to both increase
and decrease of the entropy, and the nonlinearity term and coupling term of the model \eqref{a3} make equations  more complicated, and both
creates substantial difficulties in the analysis
compared with the case \eqref{a2}. We will solve this problem in the future work.
\end{remark}

\begin{remark}
The condition $ |\rho_0'(y)|\le K_1\rho_0^\frac32$ in \eqref{h3} is essentially slow decay assumption on $\rho_0$ at the far field.
In fact, for $\rho_0(y)=\frac{K_\rho}{\langle y\rangle^{\ell_\rho}}$ with $\langle y\rangle=(1+y^2)^\frac12$ and positive constants
$K_\rho$ and $\ell_\rho$, it holds that
\begin{align*}
|\rho_0'(y)|\le K_1\rho_0^\frac32\Leftrightarrow 0\le \ell_\rho\le 2.
\end{align*}
\end{remark}

\begin{remark}
Let $K_\rho$ and $1<\ell_\rho\le 2$ be positive constants. Choose
\begin{align*}
\rho_0(y)=\frac{K_\rho}{\langle y\rangle^{\ell_\rho}}, \quad J_0\equiv1, \quad u_0, w_0, b_0\in C_0^\infty,
\quad P_0=Ae^\frac{1}{c_v}\rho_0^\gamma.
\end{align*}
Then, one can verify easily that \eqref{h1}--\eqref{h4} hold. Therefore, the set of the
initial data that fulfills the conditions in the above theorems is not empty.
\end{remark}

We now sketch the main idea used in this paper.
In our case (i.e., $\kappa=\nu=0$), the entropy can only increase in time
and thus it is bounded from below trivially. Thus, the key issue in our proof is to derive the upper bound of the entropy. To this end, similar to \cite{LX20} for the existence of entropy bounded solutions to the compressible Navier-Stokes equations with zero heat conduction, we should establish the appropriate singular type estimates, up to any finite time, of the solutions to system \eqref{1.7}, subject to \eqref{1.8}. However, due to the absence of \eqref{s1}, some new ideas are needed.

There are two main stages for carrying out the desired singular type
estimates. In the first stage, noticing that the effective viscous flux
\begin{align*}
G:=\lambda\frac{u_y}{J}-P-\frac{|h|^2}{2}
\end{align*}
satisfies
\begin{align}\label{w1}
G_t-\frac{\lambda}{J}\Big(\frac{G_y}{\rho_0}\Big)_y
=-\gamma\frac{u_y}{J}G+\frac{2-\gamma}{2}\frac{u_yH}{J}-(\gamma-1)\mu\Big|\frac{w_y}{J}\Big|^2-\frac{h\cdot w_y}{J},
\end{align}
the integrals related to terms except $\gamma\frac{u_y}{J}G$ in \eqref{w1}
cannot be handled by the same idea as in \cite{LX20}.  To deal with other terms in \eqref{w1}, motivated by \cite{LL22}, we introduce a new vector field
$F$, called ``transverse effect viscous flux", as
 \begin{align*}
F:=\mu\frac{w_y}{J}+h.
\end{align*}
One can check that $F$ satisfies
\begin{align}\label{w2}
F_t-\frac{\mu}{J}\left(\frac{F_y}{\rho_0}\right)_y=-\frac{u_y}{J}F
+\frac{w_y}{J}.
\end{align}
Based on the above facts, in order to obtain the estimate of $\sup_{0\le \tau\le t}\|\rho_0^{-\frac{\alpha}{2}}G\|_2$ and $\int_0^t\int\frac{(G_y)^2}{\rho_0^{\alpha+1}}dyd\tau$, we make use of $\frac{\rho_0}{\rho_\delta^{\alpha+1}}JG\varphi_r^2$ ($\rho_\delta:=\rho_0+\delta$) instead of $\frac{JG\varphi_r^2}{\rho_0^\alpha}$ to test \eqref{w2} in order to overcome the defect of the assumption \eqref{s1} (see \eqref{2.62}).
 In \cite{LX20}, by the assumption \eqref{s1}, one has that
\begin{align*}
\left|\int_0^t\int_{r\le |y|\le 2r}\frac{G^2}{\rho_0^{\alpha+1}}|\varphi_r'|^2dyd\tau\right|
\le M_0^2\int_0^t\int_{r\le |y|\le 2r}\frac{G^2}{\rho_0^\alpha}dyd\tau\rightarrow 0, \quad \text{as}~r\rightarrow \infty,
\end{align*}
where
\begin{align*}
\left|\varphi_r'(y)\right|\le M_0\sqrt{\rho_0(y)}, \quad \forall y\in\Bbb R, \quad M_0=\frac{4\|\eta'\|_\infty}{\sqrt{A_0}},
\end{align*}
for any $r\ge 1$ and $t\in [0, T]$.
However, if we neglect the assumption \eqref{s1}, it is difficult to control $\int_0^t\int_{r\le |y|\le 2r}\frac{G^2}{\rho_0^{\alpha+1}}|\varphi_r'|^2dyd\tau$ directly because $\rho_0$ may contain vacuum at the far field. In our case, we consider $\frac{C}{r^2}\int_0^t\int_{r\le |y|\le 2r}\frac{\rho_0}{\rho_\delta^{\alpha+1}}G^2dyd\tau$ instead of $\int_0^t\int_{r\le |y|\le 2r}\frac{G^2}{\rho_0^{\alpha+1}}|\varphi_r'|^2dyd\tau$ and we show that, for any $\delta>0$,
\begin{align*}
\frac{C}{r^2}\int_0^t\int_{r\le |y|\le 2r}\frac{\rho_0}{\rho_\delta^{\alpha+1}}G^2dyd\tau
\le \frac{C\delta^{-1}}{r}\int_0^t\int_{r\le |y|\le 2r}\frac{G^2}{\rho_\delta^\alpha}dyd\tau\rightarrow 0, \quad\text{as}~r\rightarrow\infty.
\end{align*}
Similarly, one can also obtain that $\int_0^t\int_{r\le |y|\le 2r}\frac{|F|^2}{\rho_\delta^\alpha}|\varphi_r'|^2dyd\tau\rightarrow 0$ as $r\rightarrow \infty$.
After that, by a direct calculation, one can get the $L^\infty(0,T; L^2)$ estimate of $(JF, JG, JH)$ through testing \eqref{2.17}, \eqref{2.30},
 \eqref{2.34} with $\frac{\rho_0}{\rho_\delta^{\alpha+1}}JF\varphi_r^2$,
$\frac{\rho_0JH}{\rho_\delta^{\alpha+1}}\varphi_r^2$, $\frac{\rho_0}{\rho_\delta^{\alpha+1}}JG\varphi_r^2$, respectively.
With this {\it a priori} estimate on $\rho_0^{-\frac{\alpha}{2}}F$ at hand, and combining the $L^2$ type estimate $\rho_0^{-\frac{\alpha}{2}}G$ with $L^4$ type estimate on $\rho_0^{-\frac{\alpha}{4}}h$, one gets the desired $L^4(0,T;L^\infty)$ type estimate on $\rho_0^{-\frac{\alpha}{2}}G$ and $\rho_0^{-\frac{\alpha}{2}}F$. Indeed, we carry out the $L^2$-type estimate on $\rho_0^{-\frac{\alpha}{2}}h$,
which are achieved through performing $L^4(0, T; L^\infty)$ type estimate on $(J, u, w, h, P)$ together with $L^2$-type estimates on $\rho_0^{-\frac{\alpha}{2}}F$.
  With these estimates in hand, we can show the desired upper bound and lower bound of the
entropy (see Lemma \ref{l28}).

The rest of this paper is arranged as follow. In Section \ref{sec2}, we derive
important singularly weighted \textit{a priori} estimates. Section \ref{sec3} is devoted to proving Theorem \ref{thm2}; however, since the solutions being established are Lipschitz-continuous, all results can be transformed accordingly in the Euler coordinates.

\section{\textit{A priori} estimates}\label{sec2}
This section is devoted to deriving a series of {\it a priori}
estimates, which are finite up to any finite time. Throughout this section, we always assume that $J_0\equiv 1$.

We start with the following a {\it priori} estimates of global strong solutions, which can be found in our previous work \cite{LL22}.
\begin{lemma}\label{l21}
Assume that \eqref{h1}--\eqref{h2}, and \eqref{h4} are satisfied, it holds that
\begin{align}
&\int\left(\frac{\rho_0u^2}{2}+\frac{\rho_0|w|^2}{2}+\frac{J|h|^2}{2}+\frac{JP}{\gamma-1}\right)dy=\mathcal E_0,
\quad \inf_{y\in\Bbb R}J(y, t)\ge e^{-\frac{2\sqrt{2}}{\lambda}\sqrt{\mathcal E_0}\|\rho_0\|_1}, \\
& \sup_{0\le t\le T}\Big\|\Big(J-1, F, G, H, \frac{J_y}{\sqrt{\rho_0}}, J_t, h_t, \frac{h_y}{\sqrt{\rho_0}},
 u_y, w_y, P, \frac{P_y}{\sqrt{\rho_0}}\Big)\Big\|_2 \nonumber\\
& \quad +
\int_0^T\Big\|\Big(\frac{F_y}{\sqrt{\rho_0}}, \frac{G_y}{\sqrt{\rho_0}}, \frac{u_{yy}}{\sqrt{\rho_0}},
\frac{w_{yy}}{\sqrt{\rho_0}}, \sqrt{\rho_0}u_t, \sqrt{\rho_0}w_t, H^\frac32, \sqrt{P}H\Big)\Big\|_2^2dt\nonumber\\
&\quad+\int_0^T\Big(\|P_t\|_2^4+\|(F, G)\|_\infty^4\Big)dt\le C,\label{t2.2}
\end{align}
where
\begin{align*}
\mathcal E_0:=\int\Big(\frac{\rho_0u_0^2}{2}+\frac{\rho_0|w_0|^2}{2}+\frac{|h_0|^2}{2}
+\frac{P_0}{\gamma-1}\Big)dy,
\end{align*}
and
\begin{align*}
F:=\mu\frac{w_y}{J}+h,\quad  H:=|h|^2, \quad G:=\lambda\frac{u_y}{J}-P-\frac{|h|^2}{2}
=\lambda\frac{u_y}{J}-P-\frac{H}{2}.
\end{align*}

\end{lemma}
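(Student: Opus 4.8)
The statement splits into two parts of rather different depth: the exact energy balance together with the pointwise lower bound on $J$, which I would obtain directly from the conservative structure of \eqref{1.7}, and the weighted hierarchy \eqref{t2.2}, which is the substance of \cite{LL22} and which I only sketch. First I would derive the energy identity. Testing $\eqref{1.7}_2$ with $u$ and $\eqref{1.7}_3$ with $w$ and integrating by parts the viscous terms gives $\frac{d}{dt}\int\frac{\rho_0u^2}{2}=-\lambda\int\frac{u_y^2}{J}-\int uP_y-\int u\,h\cdot h_y$ and $\frac{d}{dt}\int\frac{\rho_0|w|^2}{2}=-\mu\int\frac{|w_y|^2}{J}+\int w\cdot h_y$. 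To close the balance I would not differentiate $P$ or $|h|^2$ directly but use the conservative combinations: from $J_t=u_y$ and $\eqref{1.7}_5$ one gets $(JP)_t=(1-\gamma)u_yP+(\gamma-1)\big(\lambda\frac{u_y^2}{J}+\mu\frac{|w_y|^2}{J}\big)$, and from $\eqref{1.7}_4$ one gets $(J|h|^2)_t=-u_y|h|^2+2h\cdot w_y$. Adding the four identities, the viscous dissipations cancel against the pressure balance, the pressure cross terms combine into $-\int(uP)_y=0$, the magnetic terms $-\int u\,h\cdot h_y-\frac12\int u_y|h|^2$ cancel after one integration by parts (as $h\cdot h_y=\frac12(|h|^2)_y$), and $\int w\cdot h_y+\int h\cdot w_y=\int(w\cdot h)_y=0$. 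All boundary terms at infinity vanish in the solution class, so the total energy is conserved, which is the first identity.

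For the lower bound on $J$ I would rewrite the longitudinal momentum equation through the effective viscous flux. Since $G_y=\lambda\big(\frac{u_y}{J}\big)_y-P_y-h\cdot h_y$, equation $\eqref{1.7}_2$ reads $\rho_0u_t=G_y$, while the definition of $G$ gives $\lambda\frac{u_y}{J}=G+P+\frac H2$, hence $\lambda(\ln J)_t=\lambda\frac{u_y}{J}=G+P+\frac H2$. Integrating in time and discarding the nonnegative contributions of $P$ and $H$ (and using $J_0\equiv1$) yields $\lambda\ln J(y,t)\ge\int_0^tG(y,\tau)\,d\tau$. To bound the right side from below I would integrate $\rho_0u_t=G_y$ in $y$: since $\rho_0\in L^1$ by \eqref{h4} and $\sqrt{\rho_0}u_t\in L^2$, the flux derivative $G_y=\rho_0u_t\in L^1$, so $G$ has limits at the far field, which vanish because $G\in L^2$; thus $G(y,t)=\partial_t\int_{-\infty}^y\rho_0u\,dz$ and $\int_0^tG\,d\tau=\int_{-\infty}^y\rho_0\big(u(z,t)-u_0(z)\big)\,dz$. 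Cauchy--Schwarz and the energy identity then give $\big|\int_0^tG\,d\tau\big|\le\|\rho_0\|_1^{1/2}\big(\|\sqrt{\rho_0}u\|_2+\|\sqrt{\rho_0}u_0\|_2\big)\le2\sqrt2\,\sqrt{\mathcal E_0}\,\|\rho_0\|_1^{1/2}$, producing the exponential lower bound for $\inf_yJ$ stated in the lemma.

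The deeper part is \eqref{t2.2}, which is carried out in \cite{LL22}; I would organize it as a bootstrap centered on the fluxes $G$ and $F:=\mu\frac{w_y}{J}+h$. These satisfy the parabolic equations \eqref{w1} and \eqref{w2}, and testing them against suitable multiples of $G$ and $F$ and exploiting the smoothing of the operator $\frac1J\big(\frac{(\cdot)_y}{\rho_0}\big)_y$ delivers the $L^\infty_tL^2$ bounds on $F,G$ and the space--time bounds on $F_y/\sqrt{\rho_0}$, $G_y/\sqrt{\rho_0}$. The transverse magnetic field is handled through the transport identity $H_t=-2\frac{u_y}{J}H+2\frac{w_y}{J}\cdot h$ coming from $\eqref{1.7}_4$, which is where the dissipative quantities $H^{3/2}$ and $\sqrt P\,H$ appear; the bounds on $u_y,w_y$, their second derivatives weighted by $\rho_0^{-1/2}$, and the time derivatives $J_t,h_t,P_t$ then follow by reading off the principal parts from $F,G,H$ and using Gagliardo--Nirenberg interpolation to reach the $L^4(0,T;L^\infty)$ control of $(F,G)$. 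I expect the genuinely delicate point to be closing the nonlinear magnetic coupling---the cubic term $H^{3/2}$ and the products $u_yH$ and $h\cdot w_y$---uniformly up to any finite time in the presence of far-field vacuum, which is precisely the analysis performed in \cite{LL22}.
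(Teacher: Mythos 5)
The paper offers no proof of this lemma at all --- it is quoted verbatim from \cite{LL22} --- so your proposal supplies strictly more than the source does, and the two self-contained parts you give are correct. The energy identity via the conservative forms $(JP)_t=(1-\gamma)u_yP+(\gamma-1)\big(\lambda\frac{u_y^2}{J}+\mu\frac{|w_y|^2}{J}\big)$ and $(J|h|^2)_t=-u_y|h|^2+2h\cdot w_y$ checks out term by term, and the lower bound on $J$ via $\lambda(\ln J)_t=G+P+\frac H2\ge G$ together with $G_y=\rho_0u_t$, $G\to0$ at infinity, and $\int_0^tG\,d\tau=\int_{-\infty}^y\rho_0(u(z,t)-u_0(z))\,dz$ is exactly the Li--Xin effective-viscous-flux argument that \cite{LL22} uses; note only that Cauchy--Schwarz gives the exponent $\frac{2\sqrt2}{\lambda}\sqrt{\mathcal E_0\,\|\rho_0\|_1}$ rather than the $\frac{2\sqrt2}{\lambda}\sqrt{\mathcal E_0}\,\|\rho_0\|_1$ printed in the statement, which appears to be a typesetting slip in the lemma rather than an error on your side. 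For \eqref{t2.2} you give only an outline and defer to \cite{LL22}; that matches the paper's own treatment (a bare citation), and your outline correctly identifies the mechanism --- energy estimates for the parabolic equations \eqref{w1}--\eqref{w2} satisfied by $G$ and $F$, the Riccati-type equation $H_t+\frac{H^2}{\lambda}+\frac{2PH}{\lambda}+\dots$ obtained by substituting $\frac{u_y}{J}=\frac1\lambda(G+P+\frac H2)$ into $H_t=-2\frac{u_y}{J}H+\frac2J w_y\cdot h$ (which is where $\|H^{3/2}\|_2$ and $\|\sqrt PH\|_2$ arise), and Gagliardo--Nirenberg for the $L^4(0,T;L^\infty)$ control of $(F,G)$. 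As a standalone proof the sketch of \eqref{t2.2} would of course not suffice, but as a reconstruction of what this paper actually contains for Lemma \ref{l21} it is faithful and, where explicit, correct.
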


The conservation of the momentum is stated in the following lemma.
\begin{lemma}\label{l23}
It holds that
\begin{align*}
\int\rho_0(y)u(y, t)dy=\int\rho_0u_0(y)dy,\quad \int\rho_0(y)w(y, t)dy=\int\rho_0(y)w_0(y)dy,\quad \forall t\in[0, T].
\end{align*}
\end{lemma}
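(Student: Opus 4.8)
The plan is to integrate the two momentum balances in \eqref{1.7} over $\mathbb{R}$ and to exploit that $\rho_0$ does not depend on $t$. Since $\rho_0=\rho_0(y)$, I would first write $\frac{d}{dt}\int\rho_0 u\,dy=\int\rho_0 u_t\,dy$; this is legitimate within the strong-solution class because $\sqrt{\rho_0}\in L^2$ (as $\rho_0\in L^1$ by \eqref{h4}) and $\sqrt{\rho_0}u_t\in L^2(0,T;L^2)$, so that $\rho_0 u_t=\sqrt{\rho_0}\cdot\sqrt{\rho_0}u_t\in L^1$ for a.e.\ $t$. Next, using \eqref{1.7}$_2$ together with $h\cdot h_y=\tfrac12(|h|^2)_y=\tfrac12 H_y$, I would rewrite the flux as $\rho_0 u_t=\lambda(\frac{u_y}{J})_y-P_y-\tfrac12 H_y=G_y$, where $G$ is the effective viscous flux from Lemma \ref{l21}. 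Hence $\frac{d}{dt}\int\rho_0 u\,dy=\int G_y\,dy$, and the conservation of the longitudinal momentum reduces to the single identity $\int G_y\,dy=0$. In exactly the same manner, \eqref{1.7}$_3$ gives $\rho_0 w_t=(\mu\frac{w_y}{J}+h)_y=F_y$ with $F$ the transverse effective viscous flux, reducing the transverse statement to $\int F_y\,dy=0$.

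The key step is therefore to prove that $G$ and $F$ vanish at spatial infinity. I would establish $G_y(\cdot,t)\in L^1(\mathbb{R})$ for a.e.\ $t$ by Cauchy--Schwarz,
\begin{align*}
\int|G_y|\,dy=\int\frac{|G_y|}{\sqrt{\rho_0}}\,\sqrt{\rho_0}\,dy\le\Big\|\frac{G_y}{\sqrt{\rho_0}}\Big\|_2\,\|\rho_0\|_1^{1/2},
\end{align*}
which is finite since $\frac{G_y}{\sqrt{\rho_0}}\in L^2(0,T;L^2)$ by Lemma \ref{l21} and $\rho_0\in L^1$ by \eqref{h4}. Consequently $G(\cdot,t)$ is absolutely continuous on $\mathbb{R}$ and admits finite limits as $y\to\pm\infty$; since $G(\cdot,t)\in L^2$ by Lemma \ref{l21}, both limits must vanish, whence $\int G_y\,dy=G(+\infty)-G(-\infty)=0$. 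The identical argument, now with $\frac{F_y}{\sqrt{\rho_0}}\in L^2(0,T;L^2)$ and $F\in L^2$, yields $\int F_y\,dy=0$.

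Finally, I would integrate the identities $\frac{d}{dt}\int\rho_0 u\,dy=0$ and $\frac{d}{dt}\int\rho_0 w\,dy=0$ in time over $[0,t]$, using that $t\mapsto\int\rho_0 u\,dy=\int\sqrt{\rho_0}\,(\sqrt{\rho_0}u)\,dy$ is absolutely continuous on $[0,T]$ (because $\sqrt{\rho_0}\in L^2$ is fixed in time and $\sqrt{\rho_0}u\in C([0,T];L^2)$ has weak time-derivative $\sqrt{\rho_0}u_t\in L^2(0,T;L^2)$), to recover the stated equalities for every $t\in[0,T]$. The main obstacle is precisely the far-field vanishing of $G$ and $F$, i.e.\ the justification that the boundary contributions in $\int G_y\,dy$ and $\int F_y\,dy$ are absent: this is exactly the point where the integrability hypothesis $\rho_0\in L^1$ in \eqref{h4} enters essentially, since it is what upgrades the weighted bound $\frac{G_y}{\sqrt{\rho_0}}\in L^2$ to the unweighted $G_y\in L^1$. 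The remaining manipulations---exchanging differentiation and integration and the splitting $\rho_0 u_t=\sqrt{\rho_0}\cdot\sqrt{\rho_0}u_t$---are routine.
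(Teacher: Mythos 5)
Your argument is correct, but it runs along a genuinely different track from the paper's. The paper multiplies $\eqref{1.7}_{2,3}$ by the cutoff $\varphi_r$, integrates in space and time, and then kills the commutator terms $\int_0^t\int\big(P+\tfrac12|h|^2-\lambda\tfrac{u_y}{J}\big)\varphi_r'\,dy\,d\tau$ and $\int_0^t\int\big(h+\mu\tfrac{w_y}{J}\big)\varphi_r'\,dy\,d\tau$ term by term, using only $P\in L^1(\mathbb{R}\times(0,T))$, $(u_y,w_y)\in L^2(\mathbb{R}\times(0,T))$, $h\in L^\infty(0,T;L^2)$, the lower bound on $J$, and the scaling $|\varphi_r'|\le C/r$ on $\{r\le|y|\le 2r\}$ (Cauchy--Schwarz then yields an $r^{-1/2}$ decay). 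You instead avoid cutoffs altogether: you identify the fluxes with $G_y$ and $F_y$, upgrade the weighted bounds $\frac{G_y}{\sqrt{\rho_0}},\frac{F_y}{\sqrt{\rho_0}}\in L^2(0,T;L^2)$ from Lemma \ref{l21} to $G_y,F_y\in L^1(\mathbb{R})$ for a.e.\ $t$ via $\rho_0\in L^1$, and conclude $\int G_y\,dy=\int F_y\,dy=0$ from the vanishing of $G,F$ at infinity (absolute continuity plus $G,F\in L^2$). Both routes ultimately rest on Lemma \ref{l21} and on \eqref{h4}, and both are complete; the trade-off is that your proof is shorter and conceptually cleaner (the conserved quantity is exposed as the integral of an exact derivative that is globally $L^1$), but it consumes the higher-order weighted estimates $\frac{G_y}{\sqrt{\rho_0}},\frac{F_y}{\sqrt{\rho_0}}\in L^2(0,T;L^2)$, whereas the paper's cutoff argument gets by with the lower-order energy-level information on $P$, $u_y$, $w_y$, and $h$ separately, and is therefore slightly more robust if one wished to prove momentum conservation before the effective-flux estimates are available.
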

\begin{proof}[Proof]
Multiplying $\eqref{1.7}_{2,3}$ by $\varphi_r$ and integrating the resultants over $\mathbb{R}$, one gets that
\begin{align*}
&\frac{d}{dt}\int \rho_0u\varphi_rdy=\int\Big(P+\frac{1}{2}|h|^2-\lambda\frac{u_y}{J}\Big)\varphi_r'dy,\\
&\frac{d}{dt}\int \rho_0w\varphi_rdy=\int\Big(-h-\mu\frac{w_y}{J}\Big)\varphi_r'dy,
\end{align*}
which imply that
\begin{align*}
&\int \rho_0u\varphi_rdy=\int\rho_0u_0\varphi_rdy
+\int_0^t\int\Big(P+\frac{|h|^2}{2}-\lambda\frac{u_y}{J}\Big)\varphi_r'dyd\tau,\\
&\int \rho_0w\varphi_rdy=\int\rho_0w_0\varphi_rdy
-\int_0^t\int\Big(h+\mu\frac{w_y}{J}\Big)\varphi_r'dyd\tau.
\end{align*}
We claim that
\begin{align}
&\lim_{r\rightarrow \infty}\int_0^t\int\Big(P+\frac{1}{2}|h|^2-\lambda\frac{u_y}{J}\Big)\varphi_r'dyd\tau=0,\label{2.14}\\
&\lim_{r\rightarrow \infty}\int_0^t\int\Big(h+\mu\frac{w_y}{J}\Big)\varphi_r'dyd\tau=0.\label{2.15}
\end{align}
In fact,
by Lemma \ref{l21},
$P\in L^1(\mathbb{R}\times (0, T))$, $(u_y, w_y)\in L^2(\mathbb{R}\times (0, T))$, and $h\in L^\infty(0, T; L^2(\mathbb{R}))$,
we derive from H\"older's inequality that
\begin{align*}
&\Big|\int_0^t\int\Big(P+\frac{1}{2}|h|^2-\lambda\frac{u_y}{J}\Big)\varphi_r'dyd\tau\Big|\nonumber\\
&\le \frac{C}{r}\int_0^t\int_{r\le|y|\le 2r}\Big|P+\frac{1}{2}|h|^2-\lambda\frac{u_y}{J}\Big|dyd\tau\nonumber\\
&\le \frac{C}{r}\int_0^t\int\Big(P+\frac{1}{2}|h|^2\Big)dyd\tau
+\frac{Cc_0^{-1}}{\sqrt{r}}\int_0^t\Big(\int u_y^2dy\Big)^\frac12d\tau\nonumber\\
&\le \frac{C}{r}\int_0^t\int\Big(P+\frac{1}{2}|h|^2\Big)dyd\tau
+\frac{Cc_0^{-1}}{\sqrt{r}}\Big(\int_0^t\int u_y^2dyd\tau\Big)^\frac12\rightarrow 0, \quad \text{as}~r\rightarrow\infty,
\end{align*}
and
\begin{align*}
&\Big|\int_0^t\int\Big(h+\mu\frac{w_y}{J}\Big)\varphi_r'dyd\tau\Big|\nonumber\\
&\le \frac{C}{r}\int_0^t\int_{r\le|y|\le 2r}\Big|h+\mu\frac{w_y}{J}\Big|dyd\tau\nonumber\\
&\le \frac{C}{\sqrt{r}}\int_0^t\Big(\int_{r\le|y|\le 2r}|h|^2dy\Big)^\frac12d\tau
+\frac{Cc_0^{-1}}{\sqrt{r}}\int_0^t\Big(\int_{r\le|y|\le 2r}|w_y|^2dy\Big)^\frac12d\tau\nonumber\\
&\le \frac{C}{\sqrt{r}}\int_0^t\Big(\int_{r\le|y|\le 2r}|h|^2dy\Big)^\frac12d\tau
+\frac{Cc_0^{-1}}{\sqrt{r}}\Big(\int_0^t\int |w_y|^2dyd\tau\Big)^\frac12\rightarrow 0, \quad \text{as}~r\rightarrow\infty,
\end{align*}
where $c_0=e^{-\frac{2\sqrt{2}}{\lambda}\sqrt{\mathcal E_0}\|\rho_0\|_1}$.
Therefore, \eqref{2.14} and \eqref{2.15} hold. Consequently, the conclusion follows.
\end{proof}

The following lemma will be the key to show the global in time boundedness of the
entropy.
\begin{lemma}\label{l27}
There exists a positive constant $C$ depending only on $\gamma$, $\mu$, $\lambda$, $K_1$, $\bar{\rho}$, $\mathcal E_0$, $\|P_0\|_2$,
$\|\frac{P_0'}{\sqrt{\rho_0}}\|_2$, $\|\frac{h_0'}{\sqrt{\rho_0}}\|_2$,
 $\|\rho_0^{-\frac{\alpha}{2}}F_0\|_2$,
 $\|\rho_0^{-\frac{\alpha}{2}}G_0\|_2$,
$\|\rho_0^{-\frac{\alpha}{2}}H_0\|_2$, $\|\rho_0^{-\frac{\alpha}{2}}h_0\|_2$,
and $T$ such that
\begin{align}\label{q2.61}
&\sup_{0\le t\le T}\Big\|\Big(\rho_0^{-\frac{\alpha}{2}}F, \rho_0^{-\frac{\alpha}{2}}G, \rho_0^{-\frac{\alpha}{2}}H,
\rho_0^{-\frac{\alpha}{2}}h\Big)\Big\|_2^2
+\int_0^T\Big\|\Big(\rho_0^{-\frac{\alpha+1}{2}}F_y, \rho_0^{-\frac{\alpha+1}{2}}G_y\Big)\Big\|_2^2dt\nonumber\\
&\quad+\int_0^T\left(\Big\|\Big(\rho_0^{-\frac{\alpha}{2}}H^\frac32,
\rho_0^{-\frac{\alpha}{2}}H,
\sqrt{P}\rho_0^{-\frac{\alpha}{2}}H\Big)\Big\|_2^2
+\Big\|\Big(\rho_0^{-\frac{\alpha}{2}}F,
\rho_0^{-\frac{\alpha}{2}}G\Big)\Big\|_\infty^4\right)dt\le C.
\end{align}
\end{lemma}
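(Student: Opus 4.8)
The plan is to run a coupled family of weighted energy estimates for $F$, $G$ and $H$, all performed with the regularized density $\rho_\delta:=\rho_0+\delta$ ($\delta\in(0,1)$) inside the singular weights and with a standard spatial cutoff $\varphi_r\in C_c^\infty(\mathbb R)$, $\varphi_r\equiv1$ on $[-r,r]$, $\operatorname{supp}\varphi_r\subset[-2r,2r]$, $|\varphi_r'|\le C/r$. The point is to obtain bounds that are uniform in both $r$ and $\delta$ and only at the end let $r\to\infty$ and then $\delta\to0$, the latter by monotone convergence since $\rho_\delta^{-\alpha}\uparrow\rho_0^{-\alpha}$. Because the right-hand sides of \eqref{w1}, \eqref{w2} and of the equation for $H$ couple the three unknowns, these estimates cannot be decoupled; I would sum them (together with a lower-order estimate for $h$) and close by Grönwall, using throughout the substitutions $u_y=\frac J\lambda(G+P+\tfrac H2)$ and $w_y=\frac J\mu(F-h)$ to re-express every nonlinearity through $F,G,H,P$.

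For $F$ I multiply \eqref{w2} by $J$ and test against $\frac{\rho_0}{\rho_\delta^{\alpha+1}}F\varphi_r^2$: the term $JF_t$ combines with $J_t=u_y$ to give $\frac{d}{dt}\int\frac{\rho_0J}{2\rho_\delta^{\alpha+1}}|F|^2\varphi_r^2$, and the diffusion yields the dissipation $\mu\int\frac{|F_y|^2}{\rho_\delta^{\alpha+1}}\varphi_r^2$ plus a cross term, a cutoff term, and the source. The cross term, which carries $(\rho_0/\rho_\delta^{\alpha+1})_y$, is absorbed into the dissipation via $|\rho_0'|\le K_1\rho_0^{3/2}$ from \eqref{h3}; the cubic source pieces such as $\int\frac{\rho_0}{\rho_\delta^{\alpha+1}}|F|^2G\varphi_r^2\lesssim\|G\|_\infty\|\rho_0^{-\alpha/2}F\|_2^2$ are of Grönwall type because $\|(F,G)\|_\infty\in L^4(0,T)$ by Lemma \ref{l21} (which also yields $\|P\|_\infty\le C$ via Sobolev embedding from the $L^\infty(0,T;L^2)$ bounds on $P$ and $P_y$). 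Testing the $H$-equation $H_t+2\frac{u_y}{J}H=\frac2\mu(F\cdot h-H)$ against $\frac{\rho_0J}{\rho_\delta^{\alpha+1}}H\varphi_r^2$ turns the $u_y$-terms, after substitution, into the genuinely dissipative contributions $\int\frac{\rho_0J}{\rho_\delta^{\alpha+1}}H^3\varphi_r^2$ and $\int\frac{\rho_0J}{\rho_\delta^{\alpha+1}}PH^2\varphi_r^2$ (the sources of the $\|\rho_0^{-\alpha/2}H^{3/2}\|_2^2$ and $\|\sqrt P\rho_0^{-\alpha/2}H\|_2^2$ dissipations), while the indefinite $GH^2$ and $F\cdot h\,H$ remainders are absorbed by Young's inequality into $H^3$ plus the $F$-energy. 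Testing \eqref{w1} against $\frac{\rho_0}{\rho_\delta^{\alpha+1}}JG\varphi_r^2$ is entirely analogous and produces $\sup_t\|\rho_0^{-\alpha/2}G\|_2^2$ together with $\int_0^T\|\rho_0^{-(\alpha+1)/2}G_y\|_2^2$. Finally, testing the $H$-equation against the lower weight $\frac{\varphi_r^2}{\rho_\delta^\alpha}$ controls $\sup_t\|\rho_0^{-\alpha/2}h\|_2^2$ and yields the last dissipation $\int_0^T\|\rho_0^{-\alpha/2}H\|_2^2$ from its $-\frac1{2\lambda}\int\frac{JH^2}{\rho_\delta^\alpha}\varphi_r^2$ term, its $F\cdot h$ source being split into the $F$- and $h$-energies.

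The main obstacle, and the step where the slow-decay/far-field structure really enters, is the cutoff remainders such as $2\mu\int\frac{F_yF}{\rho_\delta^{\alpha+1}}\varphi_r\varphi_r'$ and its $G$-analogue. In \cite{LX20} these were killed by the lower bound \eqref{s1}, which made $|\varphi_r'|\lesssim\sqrt{\rho_0}$ available; in its absence I instead keep the plain cutoff and bound such a term, after Young, by $\frac C{r^2}\int_{r\le|y|\le2r}\frac{|F|^2}{\rho_\delta^{\alpha+1}}\le\frac{C}{\delta r^2}\int_{r\le|y|\le2r}\frac{|F|^2}{\rho_\delta^\alpha}$, using $\rho_\delta^{-(\alpha+1)}\le\delta^{-1}\rho_\delta^{-\alpha}$. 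For each fixed $\delta>0$ the weight $\rho_\delta^{-\alpha}\le\delta^{-\alpha}$ is bounded, so the last integral is the tail of a convergent integral and vanishes as $r\to\infty$; the same disposes of the $G$- and $H$-cutoff terms. This is precisely what lets me send $r\to\infty$ with $\delta$ frozen and obtain $\delta$-uniform estimates no longer seeing the far field, after which $\delta\to0$ is harmless since every surviving weight increases monotonically to $\rho_0^{-\alpha}$. I expect the careful bookkeeping of this double limit to be the most delicate part.

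With the $L^\infty(0,T;L^2)$ bounds on $\rho_0^{-\alpha/2}F,\rho_0^{-\alpha/2}G$ and the dissipations $\int_0^T\|\rho_0^{-(\alpha+1)/2}(F_y,G_y)\|_2^2$ in hand, the $L^4(0,T;L^\infty)$ bounds follow from the one-dimensional interpolation $\|g\|_\infty^2\le C\|g\|_2\|g_y\|_2$ with $g=\rho_0^{-\alpha/2}G$ (and $F$): since $(\rho_0^{-\alpha/2}G)_y=\rho_0^{-\alpha/2}G_y-\frac\alpha2\rho_0^{-\alpha/2-1}\rho_0'G$ and $|\rho_0'|\le K_1\rho_0^{3/2}$ give $\|(\rho_0^{-\alpha/2}G)_y\|_2\le C(\|\rho_0^{-(\alpha+1)/2}G_y\|_2+\|\rho_0^{-\alpha/2}G\|_2)$, one obtains $\|\rho_0^{-\alpha/2}G\|_\infty^4\le C\|\rho_0^{-\alpha/2}G\|_2^2(\|\rho_0^{-(\alpha+1)/2}G_y\|_2^2+\|\rho_0^{-\alpha/2}G\|_2^2)$, and integrating in time against the already-established bounds closes \eqref{q2.61}.
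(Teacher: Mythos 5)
Your proposal is correct and follows essentially the same route as the paper: the same test functions $\frac{\rho_0}{\rho_\delta^{\alpha+1}}JF\varphi_r^2$, $\frac{\rho_0 JH}{\rho_\delta^{\alpha+1}}\varphi_r^2$, $\frac{\rho_0}{\rho_\delta^{\alpha+1}}JG\varphi_r^2$ with the regularized weight $\rho_\delta=\rho_0+\delta$, the same disposal of the cutoff remainders by bounding $\rho_\delta^{-(\alpha+1)}$ via a negative power of $\delta$ so the tails vanish as $r\to\infty$ before sending $\delta\to0$, the same substitutions of $u_y,w_y$ through $F,G,H,P$ with Gr\"onwall, and the same Gagliardo--Nirenberg step for the $L^4(0,T;L^\infty)$ bounds. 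The only cosmetic difference is that you test the $H$-equation with $\rho_\delta^{-\alpha}\varphi_r^2$ for the $\rho_0^{-\alpha/2}h$ bound where the paper tests the $h$-equation with $h/\rho_0^\alpha$; these are algebraically equivalent.
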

\begin{proof}[Proof]
1. It follows from
$\eqref{1.7}_1$, $\eqref{1.7}_3$, and $\eqref{1.7}_4$ that
\begin{align}\label{2.17}
F_t-\frac{\mu}{J}\left(\frac{F_y}{\rho_0}\right)_y
=-\frac{u_y}{J}F+\frac{w_y}{J}.
\end{align}
For $\delta>0$, set $\rho_\delta=\rho_0+\delta$.
Testing \eqref{2.17} with $\frac{\rho_0}{\rho_\delta^{\alpha+1}}JF\varphi_r^2$, one obtains that
\begin{align}\label{2.58}
\frac12\frac{d}{dt}\int\frac{\rho_0J|F|^2}{\rho_\delta^{\alpha+1}}\varphi_r^2dy+\mu\int\frac{F_y}{\rho_0}
\left(\frac{\rho_0F\varphi_r^2}{\rho_\delta^{\alpha+1}}\right)_ydy=
-\frac12\int u_y\frac{\rho_0F^2\varphi_r^2}{\rho_\delta^{\alpha+1}}dy+\int w_y\frac{\rho_0F\varphi_r^2}{\rho_\delta^{\alpha+1}}dy.
\end{align}
Integration by parts and using Cauchy-Schwarz inequality, we derive from the assumption $|\rho_0'|\le K_1\rho_0^\frac32$ that
\begin{align}\label{2.59}
\int\frac{F_y}{\rho_0}
\left(\frac{\rho_0F\varphi_r^2}{\rho_\delta^{\alpha+1}}\right)_ydy
&=\int\frac{(F_y)^2}{\rho_\delta^{\alpha+1}}\varphi_r^2dy+2\int\frac{F_yF}{\rho_\delta^{\alpha+1}}\varphi_r\varphi_r'dy
+\int F_yF\frac{\rho_0\rho_0'}{\rho_\delta^{\alpha+1}}\left(\frac{1}{\rho_0}-\frac{\alpha+1}{\rho_\delta}\right)\varphi_r^2dy\nonumber\\
&\ge \frac34\int\frac{(F_y)^2}{\rho_\delta^{\alpha+1}}\varphi_r^2dy
-C\int\frac{|F|^2}{\rho_\delta^{\alpha+1}}|\varphi_r'|^2dy
-C\int\frac{\rho_0}{\rho_\delta^{\alpha+1}}|F|^2\varphi_r^2dy.
\end{align}
Substituting \eqref{2.59} into \eqref{2.58} yields that, for $r\ge 1$,
\begin{align}\label{2.60}
&\frac12\frac{d}{dt}\int\frac{\rho_0J|F|^2}{\rho_\delta^{\alpha+1}}\varphi_r^2dy
+\frac{3\mu}{4}\int\frac{|F_y|^2}{\rho_\delta^{\alpha+1}}\varphi_r^2dy\nonumber\\
&\le C\int\left[(|u_y|+|w_y|+1)\frac{\rho_0|F|^2\varphi_r^2}{\rho_0^{\alpha+1}}\right]dy
+C\int\frac{|F|^2}{\rho_\delta^{\alpha+1}}|\varphi_r'|^2dy\nonumber\\
&\le C\int\left[(|u_y|+|w_y|+1)\frac{\rho_0|F|^2\varphi_r^2}{\rho_0^{\alpha+1}}\right]dy
+\frac{C\delta^{-\alpha}}{r^2}\int_{r\le|y|\le 2r}\frac{|F|^2}{\rho_0^{\alpha}}dy.
\end{align}

2. Recalling the definitions of $G$, $H$, and $F$, we obtain that
\begin{align}\label{t2.30}
h_t=\frac{w_y}{J}-h\frac{u_y}{J}
=\frac{1}{\mu}\Big(F-h\Big)-\frac{h}{\lambda}\Big(G+P+\frac{H}{2}\Big),
\end{align}
which yields that
\begin{align*}
H_t=2h\cdot h_t=\frac{2}{\mu}\Big(F-h\Big)\cdot h-\frac{2H}{\lambda}\Big(G+P+\frac{H}{2}\Big).
\end{align*}
Thus, we have
\begin{align}\label{2.30}
H_t+\frac{H^2}{\lambda}+\frac{2H}{\mu}+\frac{2HP}{\lambda}=\frac{2}{\mu}F\cdot h-\frac{2HG}{\lambda}.
\end{align}
Testing \eqref{2.30} with $\frac{\rho_0JH}{\rho_\delta^{\alpha+1}}\varphi_r^2$ and using $\eqref{1.7}_1$, one deduces that
\begin{align*}
&\frac12\frac{d}{dt}\int \rho_0\rho_\delta^{-(\alpha+1)}JH^2\varphi_r^2dy+\int\Big(\frac{1}{\lambda}JH^3
+\frac{2}{\mu}JH^2
+\frac{2}{\lambda}JPH^2\Big)\rho_0\rho_\delta^{-(\alpha+1)}\varphi_r^2dy\nonumber\\
&=\frac12\int\rho_0\rho_\delta^{-(\alpha+1)}u_yH^2\varphi_r^2dy+\int\Big(\frac{2}{\mu}JHF\cdot h-\frac{2}{\lambda}JGH^2\Big)
\rho_0\rho_\delta^{-(\alpha+1)}\varphi_r^2dy\nonumber\\
&=\frac{1}{2\lambda}\int JH^2\Big(G+p+\frac{H}{2}\Big)\rho_0\rho_\delta^{-(\alpha+1)}\varphi_r^2dy
+\int\Big(\frac{2}{\mu}JHF\cdot h-\frac{2}{\lambda}JGH^2\Big)
\rho_0\rho_\delta^{-(\alpha+1)}\varphi_r^2dy,
\end{align*}
which yields that
\begin{align}\label{2.61}
&\frac12\frac{d}{dt}\int JH^2\frac{\rho_0}{\rho_\delta^{\alpha+1}}\varphi_r^2dy+\int\Big(\frac{3}{4\lambda}JH^3
+\frac{2}{\mu}JH^2
+\frac{3}{2\lambda}JPH^2\Big)\frac{\rho_0}{\rho_\delta^{\alpha+1}}\varphi_r^2dy\nonumber\\
&=\frac{2}{\mu}\int JHF\cdot h\frac{\rho_0}{\rho_\delta^{\alpha+1}}\varphi_r^2dy
-\frac{3}{2\lambda}\int JGH^2\frac{\rho_0}{\rho_\delta^{\alpha+1}}\varphi_r^2dy\nonumber\\
&\le \frac{3}{8\lambda}\int JH^3\frac{\rho}{\rho_\delta^{\alpha+1}}\varphi_r^2dy+C
\int JH^2\frac{\rho_0}{\rho_\delta^{\alpha+1}}\varphi_r^2dy+C\int|H|J|F|^2\frac{\rho_0}{\rho_\delta^{\alpha+1}}\varphi_r^2dy.
\end{align}

3. By a direct calculation, we deduce that
\begin{align}\label{2.34}
G_t-\frac{\lambda}{J}\Big(\frac{G_y}{\rho_0}\Big)_y
=-\gamma\frac{u_y}{J}G+\frac{2-\gamma}{2}\frac{u_yH}{J}
-(\gamma-1)\mu\Big|\frac{w_y}{J}\Big|^2-\frac{h\cdot w_y}{J}.
\end{align}
Testing \eqref{2.34} with $\frac{\rho_0}{\rho_\delta^{\alpha+1}}JG\varphi_r^2$, one gets from $|\rho_0'|\le K_1\rho_0^\frac32$ that, for $r\ge 1$,
\begin{align}\label{2.62}
&\frac12\frac{d}{dt}\int \frac{\rho_0}{\rho_\delta^{\alpha+1}}JG^2\varphi_r^2d
+\lambda\int\frac{(G_y)^2\varphi_r^2}{\rho_\delta^{\alpha+1}}dy\nonumber\\
&=\Big(\frac12-\gamma\Big)\int  \frac{\rho_0}{\rho_\delta^{\alpha+1}}u_yG^2\varphi_r^2dy
-2\lambda\int\frac{\rho_0G_y}{\rho_\delta^{\alpha+1}}G\varphi_r\varphi_r'dy
-\lambda\int \frac{GG_y}{\rho_0}\frac{\rho_0\rho_0'}{\rho_\delta^{\alpha+1}}
\left(\frac{1}{\rho_0}-\frac{\alpha+1}{\rho_\delta}\right)\varphi_r^2dy\nonumber\\
&\quad-\frac{2-\gamma}{2\lambda}\int \frac{\rho_0}{\rho_\delta^{\alpha+1}}\Big(JG+JP+\frac{JH}{2}\Big)HG\varphi_r^2dy
-\frac{\gamma-1}{\mu}\int\frac{\rho_0}{\rho_\delta^{\alpha+1}}\Big|\sqrt{J}F-\frac{1}{4\pi}\sqrt{J}h\Big|^2G\varphi_r^2dy\nonumber\\
&\quad-\frac{1}{\mu}\int\frac{\rho_0}{\rho_\delta^{\alpha+1}} h\cdot(JF-Jh)G\varphi_r^2dy\nonumber\\
&\le \frac{\lambda}{4}\int\frac{(G_y)^2\varphi_r^2}{\rho_\delta^{\alpha+1}}dy
+C\int\frac{G^2}{\rho_\delta^{\alpha}}|\varphi_r'|^2dy+C\int(1+|u_y|+|H|+|P|)\frac{\rho_0}{\rho_\delta^{\alpha+1}}G^2\varphi_r^2dy
\nonumber\\
&\quad+\frac{3}{4\lambda}\int JP H^2\frac{\rho_0}{\rho_\delta^{\alpha+1}}\varphi_r^2dy
+C\int(1+|G|)\frac{\rho_0}{\rho_\delta^{\alpha+1}}JH^2\varphi_r^2dy
+C\int\frac{\rho_0}{\rho_\delta^{\alpha+1}}J|F|^2\varphi_r^2dy\nonumber\\
&\le \frac{\lambda}{4}\int\frac{(G_y)^2\varphi_r^2}{\rho_\delta^{\alpha+1}}dy
+\frac{C\delta^{-\alpha}}{r^2}\int_{r\le |y|\le 2r}\frac{G^2}{\rho_0^\alpha}dy+
C\int(1+|u_y|+|H|+|P|)\frac{\rho_0}{\rho_\delta^{\alpha+1}}G^2\varphi_r^2dy
\nonumber\\
&\quad+\frac{3}{4\lambda}\int JP H^2\frac{\rho_0}{\rho_\delta^{\alpha+1}}\varphi_r^2dy
+C\int(1+|G|)\frac{\rho_0}{\rho_\delta^{\alpha+1}}JH^2\varphi_r^2dy
+C\int\frac{\rho_0}{\rho_\delta^{\alpha+1}}J|F|^2\varphi_r^2dy,
\end{align}
which together with \eqref{2.60} and \eqref{2.61} implies that
\begin{align}
&\frac12\frac{d}{dt}\int\frac{\rho_0J}{\rho_\delta^{\alpha+1}}(|F|^2+|G|^2+H^2)\varphi_r^2dy
+\frac{3\mu}{4}\int\frac{(F_y)^2}{\rho_\delta^{\alpha+1}}\varphi_r^2dy
+\frac{3\lambda}{4}\int\frac{(G_y)^2\varphi_r^2}{\rho_\delta^{\alpha+1}}dy\nonumber\\
&\quad+\int\Big(\frac{3}{8\lambda}JH^3+\frac{2}{\mu}JH^2
+\frac{3}{4\lambda}JPH^2\Big)\frac{\rho_0}{\rho_\delta^{\alpha+1}}\varphi_r^2dy\nonumber\\
&\le C\int\Big[(1+H+|u_y|+|w_y|)\frac{\rho_0}{\rho_\delta^{\alpha+1}}J|F|^2\varphi_r^2\Big]dy
+C\int\Big[(1+|G|)\frac{\rho_0}{\rho_\delta^{\alpha+1}}JH^2\varphi_r^2\Big]dy\nonumber\\
&\quad+C\int\Big[(1+|u_y|+|H|+|P|)\frac{\rho_0}{\rho_\delta^{\alpha+1}}JG^2\varphi_r^2\Big]dy
+\frac{C\delta^{-\alpha}}{r^2}\int_{r\le |y|\le 2r}\frac{G^2}{\rho_0^\alpha}dy\nonumber\\
&\quad+\frac{C\delta^{-\alpha}}{r^2}\int_{r\le|y|\le 2r}\frac{|F|^2}{\rho_0^{\alpha}}dy,
\end{align}
from which, it follows from Sobolev's inequality and Lemma \ref{l21} that $$\int_0^T(\|u_y\|_\infty+\|w_y\|_\infty
+\|H\|_\infty+\|G\|_\infty+\|P\|_\infty)dt\le C,$$
for a positive constant $C$ depending only on $\gamma$, $\lambda$, $\mu$, $\bar\rho$, $\frac{J_0'}{\sqrt{\rho_0}}$,
$\left\|\frac{P_0'}{\sqrt{\rho_0}}\right\|_2$, $\left\|\frac{h_0'}{\sqrt{\rho_0}}\right\|_2$,
$\mathcal E_0$, $\|\rho_0\|_1$, $\|G_0\|_2$, $\|F_0\|_2$, $\|H_0\|_2$, $\|P_0\|_2$, and $T$. Thanks to this, we derive that, for $r\ge 1$,
\begin{align}\label{2.64}
&\sup_{0\le t\le T}\left\|\left(\sqrt{\frac{\rho_0J}{\rho_\delta^{\alpha+1}}}F\varphi_r,
\sqrt{\frac{\rho_0J}{\rho_\delta^{\alpha+1}}}G\varphi_r,
\sqrt{\frac{\rho_0J}{\rho_\delta^{\alpha+1}}}H\varphi_r\right)\right\|_2^2
+\int_0^T\|\rho_\delta^{-\frac{\alpha+1}{2}} F_y\varphi_r\|_2^2dt\nonumber\\
&\quad+\int_0^T\|\rho_\delta^{-\frac{\alpha+1}{2}} G_y\varphi_r\|_2^2dt
+\int_0^T\left\|\left(\sqrt{\frac{\rho_0J}{\rho_\delta^{\alpha+1}}}H^\frac32\varphi_r,
\sqrt{\frac{\rho_0J}{\rho_\delta^{\alpha+1}}}H\varphi_r,
\sqrt{\frac{\rho_0JP}{\rho_\delta^{\alpha+1}}}H\varphi_r\right)\right\|_2^2dt\nonumber\\
&\le C\left\|\left(\sqrt{\frac{\rho_0}{\rho_\delta^{\alpha+1}}}F_0\varphi_r,
\sqrt{\frac{\rho_0}{\rho_\delta^{\alpha+1}}}G_0\varphi_r,
\sqrt{\frac{\rho_0}{\rho_\delta^{\alpha+1}}}H_0\varphi_r\right)\right\|_2^2
+\frac{C\delta^{-\alpha}}{r^2}\int_0^T\int_{r\le|y|\le 2r}\frac{|F|^2}{\rho_0^{\alpha}}dydt\nonumber\\
&\quad+\frac{C\delta^{-\alpha}}{r^2}\int_0^T\int_{r\le|y|\le 2r}\frac{G^2}{\rho_0^{\alpha}}dydt.
\end{align}
Note that the assumption $(\rho_0^{-\frac{\alpha}{2}}F, \rho_0^{-\frac{\alpha}{2}}G)\in L^2(0, T; L^2)$
 implies that the last term of
the right hand side of \eqref{2.64} tends to zero as $r\rightarrow \infty$.
Thus, one can take $r\rightarrow \infty$ first and then let $\delta\rightarrow 0$ in \eqref{2.64} to get that
\begin{align}\label{t2.66}
&\sup_{0\le t\le T}\left\|\left(\sqrt{J}\rho_0^{-\frac{\alpha}{2}}F,
\sqrt{J}\rho_0^{-\frac{\alpha}{2}}G,
\sqrt{J}\rho_0^{-\frac{\alpha}{2}}H\right)\right\|_2^2
+\int_0^T\Big\|\rho_0^{-\frac{\alpha+1}{2}} F_y\Big\|_2^2dt\nonumber\\
&\quad+\int_0^T\left\|\left(\rho_0^{-\frac{\alpha+1}{2}} G_y,
\sqrt{J}\rho_0^{-\frac{\alpha}{2}}H^\frac32,
\sqrt{J}\rho_0^{-\frac{\alpha}{2}}H,
\sqrt{JP}\rho_0^{-\frac{\alpha}{2}}H\right)\right\|_2^2dt\nonumber\\
&\le C\left\|\left(\rho_0^{-\frac{\alpha}{2}}F_0, \rho_0^{-\frac{\alpha}{2}}G_0,
\rho_0^{-\frac{\alpha}{2}}H_0\right)\right\|_2^2.
\end{align}

4. Based on the above estimates, we deduce from Gagliardo-Nirenberg inequality
\begin{align*}
\|v\|_\infty\le C\|v\|_2^\frac12\|v'\|_2^\frac12,\ \text{for}\ v\in H^1(\mathbb{R}),
\end{align*}
and the assumption
$|\rho_0'|\le K_1\rho_0^\frac32$ that
\begin{align}\label{t2.67}
\int_0^T\|\rho_0^{-\frac{\alpha}{2}}F\|_\infty^4dt&\le C\int_0^T\|\rho_0^{-\frac{\alpha}{2}}F\|_2^2
\|(\rho_0^{-\frac{\alpha}{2}}F)_y\|_2^2dt\nonumber\\
&=C\int_0^T\|\rho_0^{-\frac{\alpha}{2}}F\|_2^2\left\|\rho_0^{-\frac{\alpha}{2}}F_y
-\frac{\alpha}{2}\frac{\rho_0'}{\rho_0}\rho_0^{-\frac{\alpha}{2}}F\right\|_2^2dt\nonumber\\
&\le C\int_0^T\|\rho_0^{-\frac{\alpha}{2}}F\|_2^2\left(\|\rho_0^{-\frac{\alpha}{2}}F_y\|_2^2
+\alpha^2K_1^2\|\rho_0^\frac{1-\alpha}{2}F\|_2^2\right)dt\nonumber\\
&\le C\int_0^T\|\rho_0^{-\frac{\alpha}{2}}F\|_2^2\left(\|\rho_0^{-\frac{\alpha+1}{2}}F_y\|_2^2
+\|\rho_0^{-\frac{\alpha}{2}}F\|_2^2\right)dt\nonumber\\
&\le C.
\end{align}
Similarly, we have
\begin{align}\label{t2.68}
\int_0^T\|\rho_0^{-\frac{\alpha}{2}}G\|_\infty^4dt\le C.
\end{align}
Testing \eqref{t2.30} with $\frac{h}{\rho_0^\alpha}$ and integrating the resultant over $\mathbb{R}$, one has
\begin{align*}
\frac{d}{dt}\int\frac{h^2}{\rho_0^\alpha}dy&
=\frac{1}{\mu}\int\Big(F-h\Big)\frac{h}{\rho_0^\alpha}dy
-\frac{1}{\lambda}\int\Big(G+P+\frac{H}{2}\Big)\frac{h^2}{\rho_0^\alpha}dy\nonumber\\
&\le C\|\rho_0^{-\frac{\alpha}{2}}F\|_2\|\rho_0^{-\frac{\alpha}{2}}h\|_2+C(\|G\|_\infty+\|P\|_\infty
+\|H\|_\infty)\|\rho_0^{-\frac{\alpha}{2}}h\|_2^2\nonumber\\
&\le C(1+\|G\|_\infty+\|P\|_\infty
+\|H\|_\infty)\|\rho_0^{-\frac{\alpha}{2}}h\|_2^2+C\|\rho_0^{-\frac{\alpha}{2}}F\|_2^2,
\end{align*}
which along with Gronwall's inequality and Lemma \ref{l21} leads to
\begin{align*}
\sup_{0\le t\le T}\|\rho_0^{-\frac{\alpha}{2}}h\|_2^2\le C,
\end{align*}
for a positive constant $C$ depending only on $\gamma$, $\mu$, $\lambda$, $K_1$, $\bar{\rho}$, $\mathcal E_0$, $\|P_0\|_2$,
$\|\frac{P_0'}{\sqrt{\rho_0}}\|_2$, $\|\frac{h_0'}{\sqrt{\rho_0}}\|_2$,
 $\|\rho_0^{-\frac{\alpha}{2}}F_0\|_2$,
 $\|\rho_0^{-\frac{\alpha}{2}}G_0\|_2$,
$\|\rho_0^{-\frac{\alpha}{2}}H_0\|_2$, $\|\rho_0^{-\frac{\alpha}{2}}h_0\|_2$,
and $T$.
This combined with \eqref{t2.66}, \eqref{t2.67}, and \eqref{t2.68} yields the conclusion.
\end{proof}

Now, we are going to derive the an uniform lower bound and upper bound for entropy $s$ as follows.
\begin{lemma}\label{l28}
There exists a positive constant $C$ depending only on $\gamma$, $\mu$, $\lambda$, $K_1$, $\bar{\rho}$, $\mathcal E_0$, $\|P_0\|_2$,
$\|\rho_0^{-\frac{\alpha}{2}}P_0'\|_2$, $\|\rho_0^{-\frac{\alpha}{2}}h_0'\|_2$,
 $\|\rho_0^{-\frac{\alpha}{2}}F_0\|_2$,
 $\|\rho_0^{-\frac{\alpha}{2}}G_0\|_2$,
$\|\rho_0^{-\frac{\alpha}{2}}H_0\|_2$, $\|\rho_0^{-\frac{\alpha}{2}}h_0\|_2$,
and $T$ such that, if $\alpha\ge 1$,
\begin{align*}
\sup_{0\le t\le T}(\|u\|_{H^1}+\|w\|_{H^1}+\|\theta\|_{H^1})\le C.
\end{align*}
Moreover, if $\alpha\ge \gamma$, we have
\begin{align*}
\sup_{0\le t\le T}\|s\|_\infty\le C.
\end{align*}
\end{lemma}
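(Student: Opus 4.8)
The plan is to treat the four quantities in turn, the entropy bound being the crux and the only genuinely new part; throughout I use $\rho=\rho_0/J$ together with the two identities $\rho_0u_t=G_y$ and $\rho_0w_t=F_y$, which follow from $\eqref{1.7}_{2,3}$, the relation $H_y=2h\cdot h_y$, and the definitions of $G,F$.

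\textbf{The velocities ($\alpha\ge1$).} These are immediate. Since $\rho_0u_t=G_y$, we have $\tfrac12\tfrac{d}{dt}\|u\|_2^2=\int u\,G_y/\rho_0\,dy\le\|u\|_2\|G_y/\rho_0\|_2$, and writing $G_y/\rho_0=\rho_0^{(\alpha-1)/2}\cdot\rho_0^{-(\alpha+1)/2}G_y$ with $\rho_0^{(\alpha-1)/2}\le\bar\rho^{(\alpha-1)/2}$ (here $\alpha\ge1$ is used) gives $\|G_y/\rho_0\|_2\le C\|\rho_0^{-(\alpha+1)/2}G_y\|_2\in L^2(0,T)$ by Lemma~\ref{l27}. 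Gronwall together with $u_0\in H^1$ yields $u\in L^\infty(0,T;L^2)$, and combined with $u_y\in L^\infty(0,T;L^2)$ from Lemma~\ref{l21} this gives $u\in L^\infty(0,T;H^1)$; the identical argument using $F$ handles $w$.

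\textbf{A pointwise bound on $H$.} The decisive new ingredient for the entropy is the pointwise-in-$y$ estimate $H\le C\rho_0^\alpha$. Dividing the ODE \eqref{2.30} by $\rho_0^\alpha$ and writing $\Phi:=H/\rho_0^\alpha$, the term $-H^2/\lambda$ is dissipative and the only forcing is $\tfrac2\mu F\cdot h$, which I bound by $\tfrac{F\cdot h}{\rho_0^\alpha}\le\tfrac12|\rho_0^{-\alpha/2}F|^2+\tfrac12\Phi$. This produces, pointwise in $y$, $\Phi_t\le C(1+\|P\|_\infty+\|G\|_\infty)\Phi+C\|\rho_0^{-\alpha/2}F\|_\infty^2$, whose coefficients are all in $L^1(0,T)$ (the bound $\int_0^T(\|P\|_\infty+\|G\|_\infty)\,dt\le C$ was established inside the proof of Lemma~\ref{l27}, and $\int_0^T\|\rho_0^{-\alpha/2}F\|_\infty^2\,dt\le C$ follows from \eqref{q2.61} by H\"older). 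Since $\Phi(\cdot,0)=|\rho_0^{-\alpha/2}h_0|^2\in L^\infty$ by $H^1\hookrightarrow L^\infty$ and \eqref{h3}, Gronwall gives $\sup_{y,t}H/\rho_0^\alpha\le C$.

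\textbf{The entropy ($\alpha\ge\gamma$).} A direct computation from $\eqref{1.7}_{1,5}$ shows that $\Pi:=P/\rho^\gamma=PJ^\gamma/\rho_0^\gamma$ satisfies $\partial_t\Pi=(\gamma-1)\tfrac{J^\gamma}{\rho_0^\gamma}\big(\lambda|u_y/J|^2+\mu|w_y/J|^2\big)\ge0$; hence $\Pi\ge\Pi_0=Ae^{s_0/c_v}$, which gives the lower bound $s\ge s_0\ge-\|s_0\|_\infty$ trivially. For the upper bound I rewrite $\lambda|u_y/J|^2=(G+P+\tfrac H2)^2/\lambda$ and $\mu|w_y/J|^2=|F-h|^2/\mu$, and split $(G+P+\tfrac H2)^2\le3(G^2+P^2+H^2)$, $|F-h|^2\le2(|F|^2+H)$. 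The key observation is that $\tfrac{J^\gamma}{\rho_0^\gamma}P^2=\Pi P\le\Pi\|P\|_\infty$, so the apparently quadratic term is in fact linear in $\Pi$ with $L^1_t$ coefficient; next $\tfrac{J^\gamma}{\rho_0^\gamma}(G^2,|F|^2)\le C\|\rho_0^{-\alpha/2}(G,F)\|_\infty^2\in L^1(0,T)$; and, using $\alpha\ge\gamma$ with $H\le C\rho_0^\alpha$, the terms $\tfrac{J^\gamma}{\rho_0^\gamma}(H^2,H)$ are controlled by $C\|H\|_\infty+C\in L^1(0,T)$. Gronwall with $\Pi_0\in L^\infty$ (equivalent to $s_0\in L^\infty$) gives $\sup\Pi\le C$, i.e.\ $s=c_v\ln(\Pi/A)\le C$.

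\textbf{The temperature and the main obstacle.} For $\theta=P/(R\rho)=PJ/(R\rho_0)$ one derives from the pressure and continuity equations that $c_v\rho_0\theta_t=\tfrac{J}{\lambda}(G+\tfrac H2)^2+\tfrac{R\rho_0\theta}{\lambda}(G+\tfrac H2)+\tfrac{J}{\mu}|F-h|^2$. The weighted $L^2$ estimate closes for $\alpha\ge1$: the middle term yields $C(\|G\|_\infty+\|H\|_\infty)\|\theta\|_2^2$, while the singular terms $\rho_0^{-1}(G+\tfrac H2)^2$ and $\rho_0^{-1}|F-h|^2$ are tamed using $\tfrac{G^2}{\rho_0}\le C|\rho_0^{-\alpha/2}G|^2$, the pointwise bound $H\le C\rho_0^\alpha$ (so $\tfrac{H}{\rho_0}=|\rho_0^{-1/2}h|^2\in L^\infty(0,T;L^1)$ by $\alpha\ge1$ and Lemma~\ref{l27}), and the $L^4_tL^4_x$ control of $\rho_0^{-\alpha/2}G$ coming from \eqref{q2.61}. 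The genuinely delicate step is the bound on $\theta_y$: differentiating $\theta=PJ/(R\rho_0)$ produces $PJ_y/\rho_0$ and $PJ\rho_0'/\rho_0^2$, which is precisely why the extra hypothesis $J_0'/\rho_0\in L^2$ is imposed — one must propagate $J_y/\rho_0\in L^\infty(0,T;L^2)$ (through $J_t=u_y$ and an improved weighted estimate on $u_{yy}$, since Lemma~\ref{l21} only gives $u_{yy}/\sqrt{\rho_0}$) alongside a weighted bound on $P_y$ furnished by \eqref{h3}. I expect this propagation of the singular gradient weights to be the main technical obstacle; by contrast the entropy bound itself follows cleanly once $H\le C\rho_0^\alpha$ and the time-integrable $L^\infty_x$ norms are available.
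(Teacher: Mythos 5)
Your treatment of the velocities and of the entropy is correct, and for the entropy you take a genuinely different (and in places more elementary) route than the paper: you obtain $H\le C\rho_0^\alpha$ by a pointwise-in-$y$ Gronwall argument on the ODE \eqref{2.30} divided by $\rho_0^\alpha$, whereas the paper first proves the weighted gradient bound $\sup_t\|(P_y/\rho_0^{\alpha/2},h_y/\rho_0^{\alpha/2})\|_2\le C$ (by testing the differentiated equations \eqref{t2.55} and \eqref{t2.53} with $P_y/\rho_0^\alpha$ and $h_y/\rho_0^\alpha$) and then deduces $\rho_0^{-\alpha/2}h\in L^\infty(L^\infty)$ via Gagliardo--Nirenberg; likewise you propagate $\Pi=J^\gamma P/\rho_0^\gamma$ and absorb the quadratic pressure term as $\Pi\|P\|_\infty$, while the paper works with \eqref{2.48}, where the term $\frac1\lambda(P+\cdots)^2$ has a favorable sign and can simply be dropped before dividing by $\rho_0^\gamma$. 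Both versions of the entropy bound are sound, and yours avoids the weighted $H^1$ estimate on $h$ for that purpose.

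The genuine gap is the claim $\theta\in L^\infty(0,T;H^1)$: you explicitly leave the bound on $\theta_y$ as an ``expected obstacle'' rather than proving it, and the mechanism you propose (propagating $J_y/\rho_0$ through $J_t=u_y$ via ``an improved weighted estimate on $u_{yy}$'') is not carried out and is not how the estimate closes. The paper needs no second-derivative estimate on $u$ at all. It first closes a coupled Gronwall argument for $\|P_y/\rho_0\|_2$ and $\|h_y/\rho_0\|_2$ (inequalities \eqref{2.71}--\eqref{2.72}, driven by $\rho_0^{-1}G_y,\rho_0^{-1}F_y\in L^2(L^2)$ from Lemma~\ref{l27} with $\alpha\ge1$ and by the $L^1_t(L^\infty_x)$ bounds on $P,G,H$). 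It then observes that $J$ solves the linear ODE $J_t=\frac{J}{\lambda}\bigl(G+P+\frac H2\bigr)$, so that $J_y$ has the explicit representation
\begin{align*}
J_y=\Bigl(\frac{1}{\lambda}\int_0^t\Bigl(G_y+P_y+\frac{H_y}{2}\Bigr)d\tau\, J_0+J_0'\Bigr)
\exp\Bigl\{\frac{1}{\lambda}\int_0^t\Bigl(G+P+\frac{H}{2}\Bigr)d\tau\Bigr\},
\end{align*}
from which $\|J_y/\rho_0\|_{L^\infty_t(L^2)}\le C$ follows directly from the already-established bounds on $G_y/\rho_0$, $P_y/\rho_0$, $h_y/\rho_0$ and the hypothesis $J_0'/\rho_0\in L^2$. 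Only then is $\theta_y=\frac1R(JP_y/\rho_0+J_yP/\rho_0-\rho_0'JP/\rho_0^2)$ estimated term by term, the last term being handled by $|\rho_0'|\le K_1\rho_0^{3/2}$. Without this (or an equivalent) argument your proof of the first assertion of the lemma is incomplete; also note that the paper's $L^\infty_t(L^2)$ bound on $\theta$ itself is obtained more simply by integrating \eqref{2.48} in time against the weight $\rho_0^{-1}$, using the $L^4_t(L^\infty_x)$ and $L^\infty_t(L^2)$ controls on $\rho_0^{-1/2}F$ and $\rho_0^{-1/2}G$, rather than through a new evolution equation for $\theta$.
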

\begin{proof}
1. It follows from \eqref{1.7}$_2$ that
$$\rho_0u_t=\left(\lambda\frac{u_y}{J}-P-\frac{1}{2}|h|^2\right)_y=G_y.$$
Hence, one has $u_t=\frac{G_y}{\rho_0}$, which along with \eqref{t2.66}, \eqref{h1}, and $\alpha\ge 1$ gives that $u_t\in L^2(0, T; L^2)$. Thanks to this and $u_0\in L^2$, one obtains that $u\in L^\infty(0, T; L^2)$. This combined with Lemma \ref{l21} leads to $u\in L^\infty(0, T; H^1)$. Similarly, due to $\frac{F_y}{\rho_0}\in L^2(0, T; L^2)$ and $\rho_0w_t=\left(\mu\frac{w_y}{J}+h\right)_y=F_y$, we derive from \eqref{t2.66}, \eqref{h1}, and $\alpha\ge 1$ that $w_t\in L^2(0, T; L^2)$, which yields that $w\in L^\infty(0, T; L^2)$. This together with Lemma \ref{l21} implies that $w\in L^\infty(0, T; H^1)$.

2. We show that $\theta\in L^\infty(0, T; H^1)$ provided that $\alpha\ge 1$,
$\theta_0\in H^1$, and $\frac{P_0}{\rho_0}\in H^1$. It follows from Lemma \ref{l27} that
\begin{align}
\begin{cases}\label{2.69}
\frac{G}{\sqrt{\rho_0}}\in L^\infty(0, T; L^2)\cap L^4(0, T; L^\infty), \quad \frac{G_y}{\rho_0}\in L^2(0, T; L^2),\\
\frac{F}{\sqrt{\rho_0}}\in L^\infty(0, T; L^2)\cap L^4(0, T; L^\infty), \quad \frac{F_y}{\rho_0}\in L^2(0, T; L^2),\\
\left(\frac{H}{\sqrt{\rho_0}}, \frac{h}{\sqrt{\rho_0}}\right)\in L^\infty(0, T; L^2).
\end{cases}
\end{align}
Due to $|\rho_0'|\le K_1\rho_0^\frac32$, it holds that
\begin{align*}
\left\|\frac{P_0'}{\rho_0}\right\|_2=\left\|\left(\frac{P_0}{\rho_0}\right)'+\frac{P_0\rho_0'}{\rho_0^2}\right\|_2
\le \left\|\left(\frac{P_0}{\rho_0}\right)'\right\|_2+C\left\|\frac{P_0\sqrt{\rho_0}}{\rho_0}\right\|_2\le C\left(\left\|\left(\frac{P_0}{\rho_0}\right)'\right\|_2
+\left\|\frac{P_0}{\rho_0}\right\|_2\right)<\infty.
\end{align*}
In terms of $G$, $F$ and $H$, we rewrite $\eqref{1.7}_5$ as
\begin{align}\label{2.48}
P_t+\frac{1}{\lambda}\Big(P+\frac{2-\gamma}{2}G+\frac{2-\gamma}{4}H\Big)^2
=\frac{\gamma^2}{4\lambda}\Big(G+\frac{H}{2}\Big)^2
+\frac{\gamma-1}{\mu}|F-h|^2.
\end{align}
one gets from \eqref{2.69} and \eqref{t2.2} that
\begin{align*}
\sup_{0\le t\le T}\left\|\frac{P}{\rho_0}\right\|_2&\le \left\|\frac{P_0}{\rho_0}\right\|_2
+C\int_0^T\left\|\frac{G}{\sqrt{\rho_0}}\right\|_\infty\left\|\frac{G}{\sqrt{\rho_0}}\right\|_2dt
+C\int_0^T\left\|\frac{F}{\sqrt{\rho_0}}\right\|_\infty\left\|\frac{F}{\sqrt{\rho_0}}\right\|_2dt\nonumber\\
&\quad+C\int_0^T\left\|\frac{h}{\sqrt{\rho_0}}\right\|_\infty\left\|\frac{h}{\sqrt{\rho_0}}\right\|_2dt
+C\int_0^T\|h\|_\infty\left\|\frac{h}{\sqrt{\rho_0}}\right\|_\infty\left\|\frac{H}{\sqrt{\rho_0}}\right\|_2dt\nonumber\\
&\le C,
\end{align*}
where we have used the following fact
\begin{align*}
\left\|\frac{h}{\sqrt{\rho_0}}\right\|_\infty^2&\le C\left\|\frac{h}{\sqrt{\rho_0}}\right\|_2
\left\|\left(\frac{h}{\sqrt{\rho_0}}\right)_y\right\|_2\le C\left\|\frac{h}{\sqrt{\rho_0}}\right\|_2\left\|\frac{h_y}{\sqrt{\rho_0}}-\frac12\frac{h\rho_0'}{\rho_0^\frac32}\right\|_2\nonumber\\
&\le C\left\|\frac{h}{\sqrt{\rho_0}}\right\|_2\left\|\frac{h_y}{\sqrt{\rho_0}}\right\|_2
+C\|h\|_\infty\left\|\frac{h}{\sqrt{\rho_0}}\right\|_2.
\end{align*}
This combined with $J\in L^\infty(0, T; L^\infty)$ implies that
\begin{align}\label{lz4}
\theta=\frac{P}{R\rho}=\frac{JP}{R\rho_0}\in L^\infty(0, T; L^2).
\end{align}

3. Differentiating \eqref{2.48} with respect to $y$ and multiplying the resultant by $\frac{P_y}{\rho_0^2}$,
one gets from H\"older's and Cauchy's inequalities that
\begin{align}\label{2.71}
\frac{d}{dt}\left\|\frac{P_y}{\rho_0}\right\|_2^2
&\le C\|(P, G, F, H, h)\|_\infty\left\|\left(\frac{P_y}{\rho}, \frac{G_y}{\rho_0},
\frac{F_y}{\rho_0}, \frac{H_y}{\rho_0}, \frac{h_y}{\rho_0}\right)\right\|_2
\left\|\frac{P_y}{\rho_0}\right\|_2\nonumber\\
&\le C\left\|\left(\frac{G_y}{\rho_0},
\frac{F_y}{\rho_0}\right)\right\|_2^2
+(1+\|(P, F, G, H)\|_\infty^2)\left\|\left(\frac{P_y}{\rho_0}, \frac{h_y}{\rho_0}\right)\right\|_2^2.
\end{align}
Differentiating $\eqref{1.7}_4$ with respect to $y$ yields that
\begin{align}\label{t2.53}
h_{yt}+\frac{h_y}{\mu}+\frac{Ph_y}{\lambda}+\frac{hH_y}{2\lambda}
+\frac{Hh_y}{2\lambda}
=\frac{F_y}{\mu}-\frac{1}{\lambda}(h_yG+hG_y+hP_y).
\end{align}
Multiplying \eqref{t2.53} by $\frac{h_y}{\rho_0^2}$, it follows from Cauchy's inequality that
\begin{align}\label{2.72}
\frac{d}{dt}\left\|\frac{h_y}{\rho_0}\right\|_2^2
&\le C(1+\|G\|_\infty+\|P\|_\infty+\|H\|_\infty)\left(\left\|\frac{h_y}{\rho_0}\right\|_2^2
+\left\|\frac{P_y}{\rho_0}\right\|_2^2\right)
+C\left\|\frac{F_y}{\rho_0}\right\|_2^2+C\left\|\frac{G_y}{\rho_0}\right\|_2^2.
\end{align}
Combining \eqref{2.71} and \eqref{2.72} altogether and applying Gronwall's inequality, one has that
\begin{align}\label{lz5}
\sup_{0\le t\le T}\left\|\left(\frac{P_y}{\rho_0}, \frac{h_y}{\rho_0}\right)\right\|_2
\le C.
\end{align}
Since
\begin{align*}
J_y=\Big(\frac{1}{\lambda}\int_0^t\Big(G_y+P_y+\frac{H_y}{2}\Big)d\tau J_0+J_0'\Big)
\exp\Big\{\frac{1}{\lambda}\int_0^t(G+P+\frac{H}{2})d\tau\Big\},
\end{align*}
we obtain that
\begin{align}\label{lz6}
\sup_{0\le t\le T}\left\|\frac{J_y}{\rho_0}\right\|_2
&\le \left(\frac{1}{\lambda}\int_0^T\left\|\left(\frac{G_y}{\rho_0}, \frac{P_y}{\rho_0}\right)\right\|_2dt
+\frac{2}{\lambda}\int_0^T\|h\|_\infty\left\|\frac{h_y}{\rho_0}\right\|_2dt
+\left\|\frac{J_0'}{\rho_0}\right\|_2\right)\nonumber\\
&\quad\times\exp\left\{\frac{C}{\lambda}\int_0^T\|(G, P, H)\|_\infty dt\right\}\nonumber\\
&\le C.
\end{align}
Noting that
\begin{align*}
\theta_y=\frac{1}{R}\left(\frac{JP_y}{\rho_0}+\frac{J_yP}{\rho_0}-
\frac{\rho_0'JP}{\rho_0^2}\right),
\end{align*}
then we infer from \eqref{lz5}, \eqref{lz6},
$(J, P)\in L^\infty(0, T; L^\infty)$, and the assumption $|\rho_0'|\le K_1\rho_0^\frac32$ that
\begin{align*}
\sup_{0\le t\le T}\|\theta_y\|_2\le C\sup_{0\le t\le T}\left(\|J\|_\infty\left\|\frac{P_y}{\rho_0}\right\|_2
+\|P\|_\infty\left\|\frac{J_y}{\rho_0}\right\|_2
+\|J\|_\infty\left\|\frac{P\rho_0^\frac12}{\rho_0}\right\|\right)\le C.
\end{align*}
This combined with \eqref{lz4} implies that $\theta \in L^\infty(0, T; H^1)$.

4. It remains to prove that $s\in L^\infty(0, T; L^\infty)$ under the assumption that $s_0\in L^\infty$ and $\alpha\ge \gamma$. By the definition of $s$, it suffices to show that $\frac{P}{\rho^\gamma}=\frac{J^\gamma P}{\rho_0^\gamma}$ has uniform positive lower and upper bounds in $\mathbb{R}\times (0, T)$.

Differentiating \eqref{2.48} with respect to $y$ yields that
\begin{align}\label{t2.55}
&P_{yt}+\frac{2}{\lambda}\Big(P+\frac{2-\gamma}{2}G+\frac{2-\gamma}{4}H\Big)\Big(P_y+\frac{2-\gamma}{2}G_y
+\frac{2-\gamma}{4}H_y\Big)\nonumber\\
&=\frac{\gamma^2}{2\lambda}\Big(G+\frac{H}{2}\Big)\Big(G_y+\frac{H_y}{2}\Big)
+\frac{2(\gamma-1)}{\mu}(F-h)\cdot(F_y-h_y).
\end{align}
Multiplying \eqref{t2.55} and \eqref{t2.53} by $\frac{P_y}{\rho_0^\alpha}$ and $\frac{h_y}{\rho_0^\alpha}$, respectively,
adding the resultant together, one obtains from H\"older's inequality and Cauchy-Schwarz inequality that
\begin{align*}
\frac{d}{dt}\left\|\left(\frac{P_y}{\rho_0^\frac{\alpha}{2}},
\frac{h_y}{\rho_0^\frac{\alpha}{2}}\right)\right\|_2^2&\le
C\left\|\left(\frac{G_y}{\rho_0^\frac{\alpha}{2}},
\frac{F_y}{\rho_0^\frac{\alpha}{2}}\right)\right\|_2^2
+(1+\|(P, F, G, H)\|_\infty^2)\left\|\left(\frac{P_y}{\rho_0^\frac{\alpha}{2}},
 \frac{h_y}{\rho_0^\frac{\alpha}{2}}\right)\right\|_2^2\nonumber\\
 &\le C\left\|\left(\frac{G_y}{\rho_0^\frac{\alpha+1}{2}},
\frac{F_y}{\rho_0^\frac{\alpha+1}{2}}\right)\right\|_2^2
+(1+\|(P, F, G, H)\|_\infty^2)\left\|\left(\frac{P_y}{\rho_0^\frac{\alpha}{2}},
 \frac{h_y}{\rho_0^\frac{\alpha}{2}}\right)\right\|_2^2,
\end{align*}
from which, by Gronwall's inequality and Lemmas \ref{l21}--\ref{l27}, we have
\begin{align}\label{t2.74}
\sup_{0\le t\le T}\left\|\left(\frac{P_y}{\rho_0^\frac{\alpha}{2}},
\frac{h_y}{\rho_0^\frac{\alpha}{2}}\right)\right\|_2^2\le C.
\end{align}
Furthermore, it follows from Gagliardo-Nirenberg inequality
and the assumption $|\rho_0'|\le K_1\rho_0^\frac32$ that
\begin{align*}
\left\|\frac{h}{\rho_0^\frac{\alpha}{2}}\right\|_\infty^2
\le C\left\|\frac{h}{\rho_0^\frac{\alpha}{2}}\right\|_2
\left\|\left(\frac{h}{\rho_0^\frac{\alpha}{2}}\right)_y\right\|_2
=C\left\|\frac{h}{\rho_0^\frac{\alpha}{2}}\right\|_2
\left\|\frac{h_y}{\rho_0^\frac{\alpha}{2}}
-\frac{\alpha}{2}\frac{h\rho_0'}{\rho_0^{\frac{\alpha}{2}+1}}\right\|_2
\le C\left\|\frac{h}{\rho_0^\frac{\alpha}{2}}\right\|_2
\left\|\frac{h_y}{\rho_0^\frac{\alpha}{2}}\right\|_2
+C\left\|\frac{h}{\rho_0^\frac{\alpha}{2}}\right\|_2^2,
\end{align*}
which together with \eqref{t2.74} and \eqref{q2.61} yields that
\begin{align}\label{t2.76}
\rho_0^{-\frac{\alpha}{2}}h\in L^\infty(0, T; L^\infty).
\end{align}
To show the the boundedness from above of $\frac{J^\gamma P}{\rho_0^\gamma}$, due to the boundness of $J$,
one only needs to verify that $\frac{P}{\rho_0^\gamma}$. Indeed, we derive from $\alpha\ge \gamma$, \eqref{t2.2}, \eqref{q2.61}, and \eqref{t2.76} that
\begin{align*}
\sup_{0\le t\le T}\left\|\frac{P}{\rho_0^\gamma}\right\|_\infty
&\le \left\|\frac{P_0}{\rho_0^\gamma}\right\|_\infty+C\int_0^T\left(\left\|\frac{F}{\rho_0^\frac{\gamma}{2}}\right\|_\infty^2
+\left\|\frac{G}{\rho_0^\frac{\gamma}{2}}\right\|_\infty^2\right)dt
+C\int_0^T(1+\|H\|_\infty)\left\|\frac{h}{\rho_0^\frac{\gamma}{2}}\right\|_\infty^2dt\le C.
\end{align*}
Thus, $\frac{P}{\rho_0^\gamma}$ has a uniform upper bound on $\mathbb{R}\times(0, T)$. Moreover, it follows from $\eqref{1.7}_1$ and $\eqref{1.7}_5$ that
\begin{align*}
(J^\gamma P)_t=(\gamma-1)J^{\gamma-2}\big(\lambda(u_y)^2+\mu|w_y|^2\big)\ge 0.
\end{align*}
Thanks to this, it holds that $J^\gamma(y, t)P(y, t)\ge J_0^\gamma(y)P_0(y)=P_0(y)$, which leads to
\begin{align*}
\inf_{y\in\mathbb{R}, t\in [0, T]}\frac{J^\gamma(y, t)P(y, t)}{\rho_0^\gamma(y)}\ge \inf_{y\in\mathbb{R}}\frac{P_0(y)}{\rho_0^\gamma(y)}>0.
\end{align*}
Hence, $\frac{P}{\rho_0^\gamma}$ has a uniform lower bound in $\mathbb{R}\times(0, T)$. Thus, the desired conclusion follows.
\end{proof}

\section{Proof of Theorem \ref{thm2}}\label{sec3}

With all the {\it a priori} estimates in Section \ref{sec2} at hand, we are now ready to prove Theorem \ref{thm2}.

\begin{proof}[Proof of Theorem \ref{thm2}]

We only prove that \eqref{1.9} holds for any finite $T\in (0, \infty)$,
while the validity of \eqref{1.11} follows from \eqref{1.9} and Lemma \ref{l28}.
By ${\rm (ii)}$ of Theorem \ref{thm1}, there exists a positive time $T$ such that \eqref{1.9} holds.
Denote by $T_\ell$ the maximal time such that \eqref{1.9} holds for any $T\in (0, T_\ell)$.
We claim that $T_\ell=\infty$ and thus the conclusion holds. Assume, by contradiction, that $T_\ell<\infty$.
By Lemma \ref{l27}, the following estimate holds
\begin{align*}
&\sup_{0\le t\le T}\|(\rho_0^{-\frac{\alpha}{2}}F, \rho_0^{-\frac{\alpha}{2}}G, \rho_0^{-\frac{\alpha}{2}}H,
\rho_0^{-\frac{\alpha}{2}}h)\|_2^2
+\int_0^T\|(\rho_0^{-\frac{\alpha+1}{2}}F_y, \rho_0^{-\frac{\alpha+1}{2}}G_y)\|_2^2dt\nonumber\\
&\quad+\int_0^T\|(\rho_0^{-\frac{\alpha}{2}}H^\frac32,
\rho_0^{-\frac{\alpha}{2}}H,
\sqrt{P}\rho_0^{-\frac{\alpha}{2}}H)\|_2^2dt+\int_0^T\left\|\left(\rho_0^{-\frac{\alpha}{2}}F,
\rho_0^{-\frac{\alpha}{2}}G\right)\right\|_\infty^4dt\le C,
\end{align*}
for any $T\in (0, T_\ell)$, where $C$ is a positive constant depending only on $\gamma$, $\mu$, $\lambda$, $K_1$, $\bar{\rho}$, $\mathcal E_0$, $\|P_0\|_2$,
$\|\frac{P_0'}{\sqrt{\rho_0}}\|_2$, $\|\frac{h_0'}{\sqrt{\rho_0}}\|_2$,
 $\|\rho_0^{-\frac{\alpha}{2}}F_0\|_2$,
 $\|\rho_0^{-\frac{\alpha}{2}}G_0\|_2$,
$\|\rho_0^{-\frac{\alpha}{2}}H_0\|_2$, $\|\rho_0^{-\frac{\alpha}{2}}h_0\|_2$,
and $T$, and this constant $C$, viewing as a function of $T$,
is continuously
increasing with respect to $T\in [0, \infty)$. Since $T_\ell$ is finite positive number, the
constant $C$ above is actually independent of $T\in (0, T_\ell)$. It follows from this fact that
\begin{align*}
\left(\frac{F(\cdot, T_\ell)}{\rho_0^\frac{\alpha}{2}(\cdot)},
 \frac{G(\cdot, T_\ell)}{\rho_0^\frac{\alpha}{2}(\cdot)},
 \frac{H(\cdot, T_\ell)}{\rho_0^\frac{\alpha}{2}(\cdot)},
 \frac{h(\cdot, T_\ell)}{\rho_0^\frac{\alpha}{2}(\cdot)}\right)\Bigg|_{t=T_\ell}\in L^2.
\end{align*}
Thanks to this, taking $T_\ell$ as the initial time, by Theorem \ref{thm1}, one can see that
\eqref{1.9} holds for some $T_\ell'>T_\ell$, which contradicts to the definition of $T_\ell$.
This contradiction implies that $T_\ell=\infty$, in other words, \eqref{1.9} holds for any finite time
$T\in (0, \infty)$, the conclusion follows.

We now prove the uniqueness. Denote by $(J, u, w, h, P)$ the difference of these two solutions; that is
\begin{align*}
J=J_1-J_2,\quad u=u_1-u_2, \quad w=w_1-w_2, \quad h=h_1-h_2,\quad P=P_1-P_2.
\end{align*}
Then, $(J, u, w, P, h)$ satisfies the followings
\begin{align}
&\partial_tJ=\partial_yu,\label{3.1}\\
&\rho_0\partial_tu-\lambda\partial_y\left(\frac{\partial_yu}{J_1}\right)
+\partial_y\left(P+\frac{\eta}{J_1}J\right)+h_1\cdot\partial_yh_1-h_2\cdot\partial_yh_2=0,\label{3.2}\\
&\rho_0\partial_tw-\mu\partial_y\left(\frac{\partial_yw}{J_1}\right)+\partial_y\left(\frac{\beta}{J_1}J\right)=\partial_yh,\label{3.3}\\
&\partial_th+\varpi \partial_yu+\xi h-\varpi\beta J
-\frac{\partial_yw}{J_1}+\frac{\beta}{J_1}J=0,\label{3.4}\\
&P_t+\gamma\xi P=\chi(\partial_yu-\eta J)+\zeta(\partial_yw-\beta J),\label{3.5}
\end{align}
where
\begin{align*}
&\eta(y, t)=\frac{\partial_yu_2}{J_2}, \quad \beta(y, t)=\frac{\partial_yw_2}{J_2},
\quad \xi(y, t)=\frac{\partial_yu_1}{J_1},\\
&\varpi(y, t)=\frac{h_2}{J_1},\quad \zeta(y, t)=(\gamma-1)\mu\left(\frac{\partial_yw_1}{J_1^2}+\frac{\partial_yw_2}{J_1J_2}\right), \\
&\chi(y, t)=\left[(\gamma-1)\lambda\left(\frac{\partial_yu_1}{J_1^2}+\frac{\partial_yu_2}{J_1J_2}\right)
-\gamma\frac{P_2}{J_1}\right].
\end{align*}
Due to the regularities of $(J_i, u_i, w_i, P_i, h_i)$, $i=1, 2$, it holds that
\begin{align}\label{3.6}
(\eta, \beta, \xi)\in L^2(0, T; L^2),\quad \varpi\in L^\infty(0, T; L^2), \quad
(\eta, \beta, \xi, \varphi, \zeta, \chi)\in L^2(0, T; L^\infty).
\end{align}

Multiplying \eqref{3.1}, \eqref{3.2}, \eqref{3.3}, \eqref{3.4}, and \eqref{3.5}, respectively, by $J\varphi_r^2$, $u\varphi_r^2$, $w\varphi_r^2$,
$h\varphi_r^2$, and $P\varphi_r^2$, summing the resultants up, and integrating
over $\mathbb{R}$, one deduces that
\begin{align*}
&\frac12\frac{d}{dt}\int(J^2+\rho_0u^2+\rho_0w^2+h^2+P^2)\varphi_r^2dy
+\lambda\int\frac{(\partial_yu)^2}{J_1}\varphi_r^2dy
+\mu\int\frac{(\partial_yw)^2}{J_1}\varphi_r^2dy\nonumber\\
&=\int\left[\partial_y uJ+\left(P+\frac{\eta}{J_1}J\right)\partial_yu+\frac{\beta}{J_1}J\partial_yw
+\frac{1}{2}(h_1+h_2)h\partial_yu\right]\varphi_r^2dy\nonumber\\
&\quad+\int\left[-h\partial_yw-\varpi\partial_yuh
-\xi h^2+\varpi\beta Jh+\frac{\partial_yw}{J_1}h
-\frac{\beta}{J_1}Jh-\gamma\xi P^2
+\chi(\partial_yu-\eta J)P
\right]\varphi_r^2dy\nonumber\\
&\quad+\int\zeta(\partial_yw-\beta J)P\varphi_r^2dy
+2\int\left(P+\frac{\eta}{J_1}J-\lambda\frac{\partial_yu}{J_1}+\frac{1}{2}|h_1|^2
-\frac{1}{2}|h_2|^2\right)u\varphi_r\varphi_r'dy\nonumber\\
&\quad-2\int\left(\mu\frac{\partial_yw}{J_1}+h\right)w\varphi_r\varphi_r'dy\nonumber\\
&\le \frac{\lambda}{2}\int\frac{(\partial_yu)^2}{J_1}\varphi_r^2dy
+\frac{\mu}{2}\int\frac{(\partial_yw)^2}{J_1}\varphi_r^2dy
+C\int(J^2+\eta^2J^2+\beta^2J^2+h_1^2h^2)\varphi_r^2dy\nonumber\\
&\quad+C\int(P^2+h^2+h_2^2h^2+\varpi^2h^2+\xi h^2+\xi P^2+\chi^2P^2+\zeta^2P^2)\varphi_r^2dy\nonumber\\
&\quad+C\int(|P|+|\eta||J|+|\partial_y|+|h_1|^2+|h_2|^2)|u||\varphi_r'|dy
+C\int(|\partial_yw|+|h|)|w||\varphi_r'|dy\nonumber\\
&\le  \frac{\lambda}{2}\int\frac{(\partial_yu)^2}{J_1}\varphi_r^2dy
+\frac{\mu}{2}\int\frac{(\partial_yw)^2}{J_1}\varphi_r^2dy+C\int(|\partial_yw|+|h|)|w||\varphi_r'|dy\nonumber\\
&\quad+C(1+\|(\eta, \beta, \xi, \chi, h_1, h_2, \varpi, \zeta)\|_\infty^2)\int(J^2+P^2+h^2)\varphi_r^2dy\nonumber\\
&\quad+C\int(|P|+|\eta||J|+|\partial_yu|+|h_1|^2+|h_2|^2)|u||\varphi_r'|dy,
\end{align*}
which combined with Gronwall's inequality and \eqref{3.6} yields that
\begin{align}\label{3.7}
&\sup_{0\le t\le T}\int(J^2+\rho_0u^2+\rho_0w^2+h^2+P^2)\varphi_r^2dy\nonumber\\
&\le \underbrace{C\int_0^T\int(|\partial_yw|+|h|)|w||\varphi_r'|dydt
+C\int_0^T\int(|P|+|\eta||J|+|\partial_yu|+|h_1|^2+|h_2|^2)|u||\varphi_r'|dydt}_{Q_r}.
\end{align}
It suffices to show that $Q_r$ tends to zero as $r\rightarrow \infty$.

For $t\in (0, T)$, choosing $\xi(t), \eta(t)\in (-1, 1)$ such that
\begin{align*}
u^2(\xi(t), t)\le \frac{2\int_{-1}^1\rho_0u^2dz}{\int_{-1}^1\rho_0dz}
\ \ \ \ \text{and}\ \ \ \ |w|^2(\eta(t), t)\le \frac{2\int_{-1}^1\rho_0|w|^2dz}{\int_{-1}^1\rho_0dz}. 
\end{align*}
Then we get that
\begin{align*}
|u(y, t)|=\Big|u(\xi(t), t)+\int_{\xi(t)}^yu_y(z, t)dz\Big|
\le C\|\sqrt{\rho_0}u\|_2+\|u_y\|_2^\frac12(|y|+1)^\frac12, \quad \forall y\in\mathbb{R},
\end{align*}
which yields that, for any $r\ge 1$,
\begin{align}\label{3.8}
|u(y, t)|\le C\Big(\|\sqrt{\rho_0}u\|_2+\|u_y\|_2^\frac12\sqrt{r}\Big), \quad \forall r\le |y|\le 2r.
\end{align}
Similarly, we have
\begin{align}\label{3.9}
|w(y, t)|\le C\Big(\|\sqrt{\rho_0}w\|_2+\|w_y\|_2^\frac12\sqrt{r}\Big), \quad \forall r\le |y|\le 2r.
\end{align}
It follows from $\eqref{1.7}_1$ that
\begin{align*}
\sup_{0\le t\le T}\|J\|_2\le \int_0^T\|\partial_yu\|_2dt<\infty.
\end{align*}
This together with \eqref{3.5} yields that
\begin{align*}
P=e^{-\gamma\int_0^t\xi d\tau}\int_0^te^{\gamma\int_0^\tau\xi d\tau'}
\big[\chi(\partial_yu-\eta J)+\zeta(\partial_yw-\beta J)\big]d\tau
\end{align*}
from which, by Cauchy-Schwarz inequality and \eqref{3.6} that
\begin{align}\label{3.10}
\sup_{0\le t\le T}\|P\|_2&\le Ce^{\gamma\int_0^T\|\eta\|_\infty dt}
\int_0^T\big[\|\chi\|_\infty(\|\partial_yu\|_2+\|\eta\|_\infty\|J\|_2)+\|\zeta\|_\infty(\|\partial_yw\|_2
+\|\beta\|_\infty\|J\|_2)\big]dt\nonumber\\
&\le C\int_0^T(\|\chi\|_\infty^2+\|\partial_yu\|_2^2+\|\partial_yw\|_2^2+\|\eta\|_\infty^2+\|\beta\|_\infty^2+1)dt<\infty.
\end{align}
Similarly, by virtue of \eqref{3.6} and Cauchy-Schwarz inequality, we have
\begin{align}\label{3.11}
\sup_{0\le t\le T}\|h\|_2&\le Ce^{\int_0^T\|\xi\|_\infty dt}\int_0^T(\|\varpi\|_\infty\|\partial_yu\|_2
+\|\varpi\|_\infty\|\beta\|_\infty\|J\|_2+\|\partial_yw\|_2+\|\beta\|_\infty\|J\|_2)dt\nonumber\\
&\le C\int_0^T(\|\varpi\|_\infty^2+\|\partial_yu\|_2^2+\|\beta\|_\infty^2+\|\partial_yw\|_2^2
+1)dt<\infty.
\end{align}
Thanks to Lemma \ref{l21}, \eqref{3.8}, \eqref{3.9}, \eqref{3.10}, and \eqref{3.11},
 noticing that $|\varphi_r'|\le \frac{C}{r}$,
one gets from H\"older's inequality that
\begin{align*}
Q_r&=C\int_0^T\int_{r\le |y|\le 2r}(|P|+|\eta||J|+|\partial_yu|+|h_1|^2+|h_2|^2)|u||\varphi_r'|dydt\nonumber\\
&\quad+C\int_0^T\int_{r\le |y|\le 2r}(|\partial_yw|+|h|)|w||\varphi_r'|dydt\nonumber\\
&\le \frac{C}{\sqrt{r}}\int_0^T\int_{r\le |y|\le 2r}(|P|+|\eta|+|\partial_yu|+|h_1|^2+|h_2|^2)dydt\nonumber\\
&\quad+\frac{C}{\sqrt{r}}\int_0^T\int_{r\le |y|\le 2r}(|\partial_yw|+|h|)dydt\nonumber\\
&\le \frac{C}{\sqrt{r}}\int_0^T\int_{r\le |y|\le 2r}(|h_1|^2+|h_2|^2)dydt
+C\left(\int_0^T\int_{r\le |y|\le 2r}(|h|^2+|\partial_yw|^2)dydt\right)^\frac12\nonumber\\
&\quad+C\left(\int_0^T\int_{r\le |y|\le 2r}(|P|^2+|\eta|^2+|\partial_yu|^2)dydt\right)^\frac12,
\end{align*}
for any $r\ge 1$, which, together with the fact that $P$, $h$, $h_1$, $h_2$, $\partial_yu$, and $\partial_yw\in L^2(\mathbb{R}\times(0, T))$,
implies
\begin{align*}
Q_r\rightarrow 0, \quad \text{as}~r\rightarrow\infty.
\end{align*}
Thus, we have the conclusion by taking $r\rightarrow \infty$ in \eqref{3.7}. The conclusion follows.
\end{proof}

\end{document}